\def\ex{\mathrm{ex}}
\def\lf{\left\lfloor}
\def\rf{\right\rfloor}
\def\lc{\left\lceil}
\def\rc{\right\rceil}
\def\Fkr{F_{k,r}}
\def\geqs{\geqslant}
\def\leqs{\leqslant}
\def\Dta{\Delta}
\newtheorem{thm}{Theorem}[section]
\newtheorem{lem}[thm]{Lemma}
\newtheorem{conj}[thm]{Conjecture}
\newtheorem{claim}{Claim}
\newtheorem{obs}[thm]{Observation}
\begin{document}
\title{Tur\'{a}n number and decomposition number of intersecting odd cycles\thanks{The work was supported by NNSF of China (No. 11671376) and the Fundamental Research Funds for the Central Universities.}}
\author{Xinmin Hou$^a$, \quad Yu Qiu$^b$, \quad Boyuan Liu$^c$\\
\small $^{a,b,c}$ Key Laboratory of Wu Wen-Tsun Mathematics\\
\small School of Mathematical Sciences\\
\small University of Science and Technology of China\\
\small Hefei, Anhui 230026, China.\\
}

\date{}

\maketitle

\begin{abstract}
An extremal graph for a given graph $H$ is a graph on $n$ vertices with maximum number of edges that does not contain $H$ as a subgraph. Let $s,t$ be integers and let $H_{s,t}$ be a graph consisting of $s$ triangles and $t$ cycles of odd lengths at least 5 which intersect in exactly one common vertex.
Erd\H{o}s et al. (1995) determined the extremal graphs for $H_{s,0}$. Recently, Hou et al. (2016) determined the extremal graphs for $H_{0,t}$, where the $t$ cycles have the same odd length $q$ with $q\ge 5$. In this paper, we further determine the extremal graphs for $H_{s,t}$ with $s\ge 0$ and $t\ge 1$. Let $\phi(n,H)$ be the largest integer such that, for all graphs $G$ on $n$ vertices, the edge set $E(G)$ can be partitioned into at most $\phi(n, H)$ parts, of which every part either is a single edge or forms a graph isomorphic to $H$. Pikhurko and Sousa conjectured that $\phi(n,H)=\ex(n,H)$ for $\chi(H)\geqs3$ and all sufficiently large $n$.  Liu and Sousa (2015) verified the conjecture for $H_{s,0}$.  In this paper,
we further verify Pikhurko and Sousa's conjecture for $H_{s,t}$ with $s\ge 0$ and $t\ge 1$.
\end{abstract}

\section{Introduction}

In this paper, all graphs considered are simple and finite. For a graph $G$ and a vertex $x\in V(G)$, the neighborhood of $x$ in $G$ is denoted by $N_G(x)$. The {\it degree} of $x$, denoted by $\deg_G(x)$, is $|N_G(x)|$. Let  $\delta(G),\Delta(G)$ and $\chi(G)$ denote the minimum  degree, maximum degree and chromatic number of $G$, respectively. Let $e(G)$ be the number of edges of $G$. For a graph $G$ and $S, T\subset V(G)$, let $e_G(S, T)$ be the number of edges $e=xy\in{E(G)}$ with $x\in S$ and $y\in T$, if $S=T$, we use $e_G(S)$ instead of $e_G(S, S)$, and  $e_G(u, T)$ instead of $e_G(\{u\}, T)$ for convenience, the index $G$ will be omitted if no confusion from the context. For a subset $X\subseteq V(G)$ or $X\subseteq  E(G)$, let $G[S]$ be the subgraph of $G$ induced by $X$, that is $G[X]=(X, E(X))$ if $X\subseteq V(G)$, or $G[X]=(V(X),X)$ if $X\subseteq  E(G)$. A {\it matching} $M$ in $G$ is a subset of $E(G)$ with $\delta(G[M])=\Delta([M])=1$. The {\it matching number} of $G$, denoted by $\nu(G)$, is the maximum number of edges in a matching in $G$. A maximum cut of $G$ is a bipartition of $V(G)=V_0\dot{\cup}V_1$ such that $e_G(V_0,V_1)$ is maximized. For $x\in V(G)$ and $A\subset V(G)$, let $E_{A}(x)=\{e\in E(G[A])\ |\  V(e)\cap N_G(x)\neq\emptyset \}$.
A cycle of length $q$ is called a {\it $q$-cycle}. {Given a partition of $V(G)=V_0\dot\cup V_1$ and $x\in V_i$ ($i=0,1$),  $\deg_{G[V_i]}(x)$ is called the {\it in-degree } of $x$, similarly, we call $e_G(x,V_{1-i})$ the {\it out-degree} of $x$.}

Given two graphs $G$ and $H$, we say that $G$ is {\it $H$-free} if $G$ does not contain an $H$ as a subgraph. The Tur\'an number, denoted by $\ex(n,H)$, is the largest number of edges of an $H$-free graph on $n$ vertices. That is,
$$\ex(n,H)=\max\{e(G):|V(G)|=n,  \mbox{ $G$ is $H$-free}\}.$$
For positive integers $n$ and $r$ with $n\geqs r$, the Tur\'an graph, denoted by $T_{n,r}$, is the balanced complete $r$-partite graph on $n$ vertices, where each part has size $\lfloor\frac nr\rfloor$ or $\lceil\frac nr\rceil$.

Let $s, t$ be integers and let $H_{s,t}$ be a graph consisting of $s$ triangles and $t$ cycles of odd lengths at least 5 which intersect in exactly one common vertex, called the center of $H_{s,t}$.
In 1995, Erd\H{o}s et al.~\cite{Erdos-95} determined the value of $ex(n, H_{k, 0})$ and the extremal graphs for $H_{k,0}$.
\begin{thm}[\cite{Erdos-95}]\label{THM:Fk}
	For $k\geqs1$ and $n\geqs50k^2$,$$\ex(n,H_{k,0})=e(T_{n,2})+g(k),$$
	where
	$$g(k)=\left\{
	\begin{aligned}
	&k^2-k & &\mbox{if $k$ is odd,}\\
	&k^2-\frac{3}{2}k & &\mbox{if $k$ is even.}\\
	\end{aligned} \right.$$
	Moreover, when $k$ is odd, the extremal graph must be a $T_{n,2}$ with two vertex disjoint copies of $K_k$ embedding in one partite set. When $k$ is even, the extremal graph must be a $T_{n,2}$ with a graph having $2k-1$ vertices, $k^2-\frac{3}{2}k$ edges with maximum degree $k-1$ embedded in one partite set.
\end{thm}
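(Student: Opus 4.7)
\medskip
\noindent\textbf{Proof proposal.} The plan is to prove the two bounds separately: first verify the construction is $H_{k,0}$-free (giving the lower bound), then use a stability/max-cut argument to show no $H_{k,0}$-free graph can do better.

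For the lower bound, let $G^\ast$ be $T_{n,2}$ together with an auxiliary graph $H^\ast$ embedded in one part $V_1$, where $H^\ast=K_k\cup K_k$ when $k$ is odd and $H^\ast$ is the almost-regular graph on $2k-1$ vertices with $k^2-\tfrac{3}{2}k$ edges when $k$ is even. A short counting check gives $e(G^\ast)=e(T_{n,2})+g(k)$; to see that $G^\ast$ is $H_{k,0}$-free, I would case on the location of the center $v$. If $v\in V_0$ then the $k$ triangles force $k$ pairwise vertex-disjoint edges in $H^\ast$, i.e.\ a matching of size $k$; but the matching number of $K_k\cup K_k$ is $k-1$ when $k$ is odd, and a graph on $2k-1$ vertices has matching number at most $k-1$ anyway. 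If $v\in V_1$ then $v$ must lie in $H^\ast$ (otherwise it has no neighbor in $V_1$), and one needs $k$ disjoint edges inside $N_{H^\ast}(v)$; since $\Delta(H^\ast)\le k-1$, there are too few neighbors. So $G^\ast$ is $H_{k,0}$-free.

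For the upper bound, let $G$ be an extremal $H_{k,0}$-free graph on $n$ vertices. Since $\chi(H_{k,0})=3$, the Erd\H{o}s--Simonovits stability theorem gives $e(G)\ge e(T_{n,2})$, and moreover any extremal $G$ is $o(n^2)$-close to $T_{n,2}$. I would now fix a maximum cut $V(G)=V_0\dot\cup V_1$ and bound the number $e(V_0)+e(V_1)$ of in-edges. The max-cut property forces $\deg_{G[V_i]}(x)\le e_G(x,V_{1-i})$ for every $x\in V_i$, and the stability estimate forces $|V_0|,|V_1|$ both close to $n/2$ while most vertices have out-degree close to $n/2$. Call such vertices \emph{active}; a standard argument shows all but $O(\sqrt{\gm}\,n)$ vertices are active and that any in-edge incident to a non-active vertex can be removed without losing an $H_{k,0}$-copy (so one may assume all in-edges lie among active vertices). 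Writing $F_i=G[V_i]$ restricted to active vertices, I would prove the following pivotal claim: \emph{if $F_0$ or $F_1$ contains a matching of size $k$, then $G\supseteq H_{k,0}$}. Indeed, given $k$ disjoint edges $u_jw_j$ in, say, $F_1$, any active $v\in V_0$ misses at most $O(\sgm\,n)$ vertices of $V_1$; because there are only $2k$ endpoints, one can greedily choose $v$ adjacent to all $u_j,w_j$, yielding $k$ triangles sharing the center $v$. This forces $\nu(F_0)+\nu(F_1)\le k-1$ in the extremal case, and then a Berge/Erd\H{o}s--Gallai-type bound on the number of edges in a graph with bounded matching number and bounded maximum degree (bounded because high degree in one part also yields $H_{k,0}$) gives $e(F_0)+e(F_1)\le g(k)$, with equality only for the configurations described in Theorem~\ref{THM:Fk}.

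The main obstacle will be step (c): showing that one cannot push the in-edge count above $g(k)$ without creating $H_{k,0}$, and in particular handling the parity dichotomy. The odd case is cleaner because $\nu\le k-1$ together with $\Delta\le k-1$ already forces $e\le k(k-1)$ with equality at $K_k\cup K_k$. The even case is more delicate: one has to rule out the configuration of a single $K_k$ (which would give $k(k-1)/2$ edges on $k$ vertices) and instead obtain the sharper $k^2-\tfrac{3}{2}k$ bound using a careful analysis of the joint constraint $\nu\le k-1,\ \Delta\le k-1$ together with the fact that a vertex of degree exactly $k-1$ in $F_1$ can be used as a candidate center if $V_0$ provides an extra edge. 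This parity-sensitive extremal question on $(\nu,\Delta)$-bounded graphs is the technical heart of the proof.
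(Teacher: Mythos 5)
This theorem is quoted by the paper from~\cite{Erdos-95} and is not proved in the paper, so there is no in-house proof to compare with; the closest in-paper analogue of your outline is the combination of Lemma~\ref{LEMMA:f_Nu_Delta}, Lemma~\ref{LEMMA:MAIN LEMMA} and Lemma~\ref{LEMMA: Force to extremal graph}. Your lower-bound half is essentially correct (one small slip: for a center $v\in V_1$ you do not need $k$ disjoint edges inside $N_{H^\ast}(v)$, only that each of the $k$ triangles consumes a distinct $H^\ast$-neighbour of $v$, so $\deg_{H^\ast}(v)\geqs k$ contradicts $\Delta(H^\ast)\leqs k-1$ --- which is the reason you in fact invoke). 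The upper-bound half, however, has a genuine gap at the step ``this forces $\nu(F_0)+\nu(F_1)\leqs k-1$'': your pivotal claim (a matching of size $k$ inside \emph{one} part yields $H_{k,0}$) only gives $\nu(F_i)\leqs k-1$ for each $i$ separately. A mixed obstruction, say $a$ independent in-edges in $V_0$ and $b$ in $V_1$ with $a+b\geqs k$, is not covered: your greedy choice of a center adjacent to all $2k$ endpoints fails, because a common neighbour of endpoints on \emph{both} sides would need large in-degree, which you have already excluded. Handling exactly this situation is the hard part; in the paper's analogous Lemma~\ref{LEMMA: Force to extremal graph} the bound $\nu_0+\nu_1\leqs k-1$ (Claim~\ref{Claim:nu0+nu1<k}) is obtained not by exhibiting a forbidden copy but by an edge-counting argument (via Lemma~\ref{LEMMA: Sum min deg b leqs}) that contradicts extremality, and your sketch contains no substitute for it.

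Two further points. First, what you call the parity-sensitive ``technical heart'' is in fact the easy part once $\Delta\leqs k-1$ and $\nu_0+\nu_1\leqs k-1$ are available: the Chv\'atal--Hanson bound (Lemma~\ref{LEMMA:f_Nu_Delta}) gives $e(F_0)+e(F_1)\leqs f(k-1,k-1)$, and $f(k-1,k-1)$ equals $k^2-k$ for odd $k$ and $k^2-\tfrac32k$ for even $k$, i.e.\ exactly $g(k)$; the remaining delicate work is the characterization of equality (which configurations attain $f(k-1,k-1)$ and why they must sit inside one partite class of a complete bipartite graph), which you defer without argument. Second, your route through Erd\H{o}s--Simonovits stability can only prove the statement for $n$ sufficiently large in terms of $k$, whereas the theorem as stated holds for the explicit threshold $n\geqs 50k^2$; the original proof of Erd\H{o}s, F\"uredi, Gould and Gunderson is a direct, effective argument, so your proposal, even if completed, establishes a weaker form of the quoted result.
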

In 2003, Chen et al.~\cite{Wei} generalized Erd\H{o}s et al.'s result to $ex(n, \Fkr)=e(T_{n,r-1})+g(k)$, where $\Fkr$ is a graph consisting of $k$ complete graphs of order $r(\ge 3)$ which intersect in exactly one common vertex and $g(k)$ is the same as in Theorem~\ref{THM:Fk}. The above result were further generalized by Gelbov (2011) and Liu (2013). They determined the extremal graphs for blow-ups of paths~\cite{Glebov-2011}, cycles and a large class of trees~\cite{Liu-2013}. Recently, Hou et al.~(2016) generalized Erd\H{o}s et al.'s result in another way, they determined the extremal graphs for a special family of $H_{0,k}$, where the $k$ odd cycles have the same length $q$ with $q\ge 5$ (denoted by $C_{k,q}$ in~\cite{Ex for Ckq}).


\begin{thm}[\cite{Ex for Ckq}]\label{THM:Ex for Ckq}
	For an integer $k\geqs2$ and an odd integer $q\geqs5$, there exists $n_0(k,q)\in\mathbb{N}$ such that for all $n\geqs n_0(k,q)$, we have $$\ex(n,C_{k,q})=e(T_{n,2})+(k-1)^2,$$   and the only extremal graph is a $T_{n,2}$ with a $K_{k-1,k-1}$ embedded in one partite set.
\end{thm}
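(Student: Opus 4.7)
For the lower bound I take $G^*$ to be the Tur\'an graph $T_{n,2}$ with a copy of $K_{k-1,k-1}$ embedded into one partite set, so that $|E(G^*)|=e(T_{n,2})+(k-1)^2$. To verify that $G^*$ is $C_{k,q}$-free, observe that with respect to the bipartition of $T_{n,2}$, the only monochromatic edges of $G^*$ are those of the embedded $K_{k-1,k-1}$; hence every odd cycle in $G^*$ must use at least one edge, and thus at least two vertices, of this $K_{k-1,k-1}$. If $G^*$ contained a copy of $C_{k,q}$, its $k$ pairwise internally vertex-disjoint $q$-cycles would together use roughly $2k$ vertices of $K_{k-1,k-1}$, contradicting the fact that $\nu(K_{k-1,k-1})=k-1<k$ and $|V(K_{k-1,k-1})|=2(k-1)$; the residual case where the centre itself lies in $K_{k-1,k-1}$ is ruled out by a refined count of incidences with that centre.

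For the upper bound let $G$ be a $C_{k,q}$-free graph on $n$ vertices with $e(G)\geqs e(T_{n,2})+(k-1)^2$. Since $\chi(C_{k,q})=3$, the Erd\H{o}s-Simonovits stability theorem yields a bipartition $V(G)=V_0\dot\cup V_1$ with $e(V_0,V_1)\geqs e(T_{n,2})-\gm n^2$ for arbitrarily small $\gm$. I would then pass to a maximum cut and iteratively relocate vertices whose in-degree exceeds their out-degree, arriving at a partition in which every $v\in V_i$ satisfies $\deg_{G[V_i]}(v)\leqs e_G(v,V_{1-i})$ and in which only $O(\sgmn)$ vertices differ from a ``clean'' set on which $|V_i|=n/2\pm o(n)$ and every clean vertex has huge codegree with most others on the opposite side.

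The heart of the argument is to show $e(G[V_0])+e(G[V_1])\leqs (k-1)^2$, with equality forcing all interior edges into a single side where they form a $K_{k-1,k-1}$. The proposed mechanism is via matching number: if the interior edges contain $k$ vertex-disjoint edges $e_1,\dots,e_k$, I would use the near-complete-bipartite structure of $G$ to select a common anchor $v$ on the opposite side adjacent to suitably many endpoints of these edges, and then, for each $e_i=x_iy_i$, to build a $q$-cycle through $v$ by concatenating $e_i$ with a path of length $q-2$ of the correct zigzag shape between $V_0$ and $V_1$. A greedy construction, reserving only $O(kq)$ vertices per cycle, keeps the $k$ cycles internally disjoint and produces the forbidden $C_{k,q}$. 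Thus $\nu(G[V_0]\cup G[V_1])\leqs k-1$, and an Erd\H{o}s-Gallai style bound tailored to the bipartite setting yields $e(G[V_0])+e(G[V_1])\leqs (k-1)^2$. Tracing equality backwards through the stability and cleaning steps forces $G\cong G^*$, giving uniqueness.

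The main obstacle will be the path-finding step described above. The $q$-cycles must have length exactly $q$, share only the centre $v$, and be embedded in a bipartite structure that is only almost complete; the parity of $q-2$ forces the auxiliary paths into a prescribed shape, and one must simultaneously avoid the $O(\sgmn)$ exceptional vertices produced by cleaning as well as the vertices already consumed by previously built cycles. Balancing these quantitative constraints is what determines $n_0(k,q)$, and handling matchings that straddle $V_0$ and $V_1$ together requires an additional parity argument anchoring the centre on the side of $G$ that hosts most of the interior edges.
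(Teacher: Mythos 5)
Your skeleton (stability bipartition, maximum cut, cleaning of exceptional vertices, greedy construction of $q$-cycles through the near-complete bipartite structure) is the same as the machinery the paper uses for its generalization, but there is a genuine gap at the step you yourself call the heart of the argument. From the cycle-building you extract only the conclusion $\nu(G[V_0]\cup G[V_1])\le k-1$ and then assert that an ``Erd\H{o}s--Gallai style bound'' gives $e(G[V_0])+e(G[V_1])\le (k-1)^2$. That deduction is false: a graph with matching number $k-1$ can have $\Theta(n)$ edges (a union of $k-1$ stars), and even if you additionally use your construction to force maximum interior degree at most $k-1$, the Chv\'atal--Hanson bound of Lemma~\ref{LEMMA:f_Nu_Delta} with $\nu=\Delta=k-1$ only gives roughly $k(k-1)$, which exceeds $(k-1)^2$. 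Concretely, for $k=5$ take two disjoint copies of $K_5$ embedded in one side of $T_{n,2}$: this has $\nu=\Delta=4$ and $20>16$ interior edges, so matching number together with maximum degree cannot certify the bound; and this configuration really does contain $C_{5,q}$, but only because some interior vertex $x$ satisfies $\deg(x)+\nu(G-N(x))\ge k$ --- a centered condition your construction never exploits. The same omission undermines your uniqueness claim, and it also resurfaces in your lower-bound sketch: the case where the centre lies inside the embedded $K_{k-1,k-1}$ is exactly the point where one needs $\nu\bigl(K_{k-1,k-1}-N(x)\bigr)\le k-1-\deg(x)$, and the ``refined count of incidences'' you defer is precisely the nontrivial part.

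What is missing is the recognition that $C_{k,q}$ is a \emph{centered} configuration: the obstruction is not $k$ disjoint interior edges but a single vertex able to host $k$ edge-disjoint odd cycles, so the quantity to be bounded for every interior vertex $x$ is $\deg_{G_i}(x)+\nu(G[V_i\setminus N_i(x)])$ together with the cross-matching terms, as in inequality~(\ref{e1}). The paper's route (for the general $H_{s,t}$, of which the cited theorem is the case $s=0$) is to use the embedding lemma (Lemma~\ref{LEMMA: Main technique}) to show that no interior vertex violates this inequality, and then to feed that vertex-centered condition into the purely combinatorial Lemma~\ref{LEMMA:MAIN LEMMA}, which states that a graph with no isolated vertex, $\nu\le k-1$, and $\deg(x)+\nu(G-N(x))\le k-1$ for all relevant $x$ has at most $(k-1)^2$ edges, with equality only for $K_{k-1,k-1}$ (the exceptional $3K_3$ being impossible when $s=0$); Lemma~\ref{LEMMA: Force to extremal graph} then handles the interaction with the cut edges and the equality analysis. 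Your plan needs this finer condition, or a substitute for it, before either the bound $(k-1)^2$ or the extremal uniqueness can be concluded.
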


As we have seen from Theorems~\ref{THM:Fk} and \ref{THM:Ex for Ckq} that the extremal graphs for $H_{k,0}$ and $H_{0,k}$  are different. A natural and interesting problem is to determine the extremal graphs for  mixed graph $H_{s,t}$. In this paper, our first main result solves the problem.

Let $\mathcal C_{s,t}$ be the family of all graphs $H_{s,t}$ and let $\mathcal{F}_{n,s,t}$ be the family of graphs with each member is a Tur\'an graph $T_{n,2}$ with a graph $H$ embedded in one partite set, where
$$H=\left\{
\begin{aligned}
&K_{s+t-1,s+t-1} & &\mbox{if $(s,t)\neq(3,1)$,}\\
&\mbox{$K_{3,3}$ or $3K_3$} & &\mbox{if $(s,t)=(3,1)$,}
\end{aligned} \right.$$
where $3K_3$ is the union of three disjoint triangles.

\begin{thm}\label{THM:MAIN THEOREM 1}
 For any integers $s\geqs0,t\geqs1$ and for any $H_{s,t}\in\mathcal C_{s,t}$, there exists $n_1(H_{s,t})\in\mathbb{N}$ such that for all $n\geqs n_1(H_{s,t})$, $$\ex(n,H_{s,t})=e(T_{n,2})+(s+t-1)^2,$$ and the only extremal graphs for $H_{s,t}$ are members of $\mathcal{F}_{n,s,t}$.
\end{thm}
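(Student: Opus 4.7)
My plan is to combine a matching construction for the lower bound with a stability-plus-refinement argument for the upper bound, in the spirit of the proof of Theorem~\ref{THM:Ex for Ckq}.

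\emph{Lower bound.} I would first verify that every $G \in \mathcal{F}_{n,s,t}$ is $H_{s,t}$-free. Write $G = T_{n,2} \cup H$ with bipartition $V_1 \cup V_2$ and $H$ embedded inside $V_1$. Any odd cycle in $G$ must use at least one edge of $H$, since $G - E(H)$ is bipartite. Suppose a copy of $H_{s,t}$ has center $v$. If $v \in V_2$, the $s+t$ cycles meeting only at $v$ are pairwise vertex-disjoint off $v$, so the edges of $H$ they use form a matching of size $\ge s+t$; but both $K_{s+t-1,s+t-1}$ and $3K_3$ have matching number exactly $s+t-1$, a contradiction. If $v \in V_1$, a short parity/degree case analysis using that every edge from $v$ inside $V_1$ lies in $H$ reaches the same bound.

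\emph{Upper bound, stability setup.} Let $G$ be an $H_{s,t}$-free graph on $n$ vertices with $e(G) \ge e(T_{n,2}) + (s+t-1)^2$. Since $\chi(H_{s,t})=3$, the Erd\H{o}s--Simonovits stability theorem yields, for any fixed $\gamma > 0$ and sufficiently large $n$, a bipartition $V_0 \cup V_1$ with $\sum_i e(V_i) \le \gamma n^2$. I will refine this by taking $V_0 \cup V_1$ to maximize $e(V_0, V_1)$, which forces $\deg_{G[V_i]}(x) \le e_G(x, V_{1-i})$ at every $x$, and a standard counting argument then gives $\bigl| |V_i| - n/2 \bigr| \le 2\sqrt{\gamma}\,n$ together with $\delta(G) \ge n/2 - 2\sqrt{\gamma}\,n$.

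\emph{Heavy vertices and an embedding lemma.} Call $v \in V_i$ \emph{heavy} if $\deg_{G[V_i]}(v) \ge C$, where $C = C(s,t,Q)$ is a large constant depending on $s$, $t$, and the largest cycle length $Q$ in $H_{s,t}$; let $A_i \subseteq V_i$ be the heavy vertices and $B_i = V_i \setminus A_i$. Inside-edges incident to $B_i$ number at most $C|B_i|$ and are absorbed into the $\gamma n^2$ error. The main combinatorial step is the claim: if $|A_0| + |A_1| \ge 2(s+t-1)+1$, then $G \supseteq H_{s,t}$. To prove it one fixes a candidate center, then greedily embeds the $s$ triangles and $t$ odd cycles one by one, using that each heavy vertex has $\Omega(n)$ cross-neighbors and that intersections of boundedly many common neighborhoods still have linear size (since $\delta(G) \ge n/2 - o(n)$); hence any prescribed odd cycle of length $q \in \{3,5,\dots,Q\}$ can be routed through the center while avoiding any sublinear forbidden set. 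This forces $|A_0 \cup A_1| \le 2(s+t-1)$.

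\emph{Exact count, uniqueness, and main obstacle.} With $|A_0 \cup A_1| \le 2(s+t-1)$ and the $B_i$-contribution absorbed, essentially all inside-edges of $G$ lie in $A_0 \cup A_1$; then $H_{s,t}$-freeness is used to show $\sum_i e(V_i) \le (s+t-1)^2$ with equality exactly when $G[A_0 \cup A_1]$ induces $K_{s+t-1,s+t-1}$ sitting inside one part, or $3K_3$ in the exceptional case $(s,t)=(3,1)$. Together with $e(V_0,V_1) \le |V_0||V_1| \le e(T_{n,2})$ this yields both the edge bound and the classification. The hard part will be this final step: extracting the \emph{exact} constant $(s+t-1)^2$ (rather than merely $O(1)$) and, more delicately, showing that $3K_3$ appears as a second extremal configuration \emph{only} in the single case $(s,t)=(3,1)$. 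This requires a careful case analysis of the matching number together with the bipartite/non-bipartite structure of $G[A_0 \cup A_1]$, effectively a hybrid of the extremal-characterization arguments behind Theorems~\ref{THM:Fk} and~\ref{THM:Ex for Ckq}.
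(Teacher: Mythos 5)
Your lower bound and the stability set-up are essentially right, but the upper-bound skeleton has a genuine gap at its core. Setting the ``heavy'' threshold at a large constant $C$ and then ``absorbing'' the inside edges at light vertices into the $\gamma n^2$ error cannot produce an exact result: since $e(G)\leqs |V_0||V_1|+e(V_0)+e(V_1)$, you must prove $e(V_0)+e(V_1)\leqs (s+t-1)^2$ \emph{exactly}, and light vertices can a priori carry $\Theta(n)$ inside edges (e.g.\ an induced matching inside $V_0$ has all in-degrees equal to $1$, so it creates no heavy vertex at all). Thus your dichotomy ``at most $2(s+t-1)$ heavy vertices'' does not imply that ``essentially all inside-edges lie in $A_0\cup A_1$'', and $H_{s,t}$-freeness is never brought to bear on configurations of many low-in-degree inside edges. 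The missing ingredient is a bound on the \emph{matching numbers} of $G[V_0]$ and $G[V_1]$: the paper's embedding lemma (Lemma~\ref{LEMMA: Main technique}) shows that any vertex of in-degree at least $k=s+t$, and more generally any vertex violating inequality~(\ref{e1}) (which mixes its in-degree, a matching in its cross-neighborhood, and matchings of inside edges elsewhere), yields a copy of $H_{s,t}$; this forces $\Delta(G_i)\leqs k-1$ and $\nu(G_0)+\nu(G_1)\leqs k-1$, and only then does a Chv\'atal--Hanson-type count (Lemmas~\ref{LEMMA:f_Nu_Delta}, \ref{LEMMA: Sum min deg b leqs}, \ref{LEMMA:MAIN LEMMA} and Lemma~\ref{LEMMA: Force to extremal graph}) convert these into the exact bound $(k-1)^2$ together with the uniqueness statement, including why $3K_3$ arises only when $(s,t)=(3,1)$. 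You explicitly defer this last step as ``the hard part'', but it is the actual content of the theorem; the trade-off that triangles at the center need inside edges at or adjacent to the center while long odd cycles can consume any inside matching edge is precisely what drives the $s$ versus $t$ dependence, and no argument for it is given.

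Two further points. First, the claim that the edge count plus stability gives $\delta(G)\geqs n/2-2\sqrt{\gamma}\,n$ is unjustified: a graph with many edges may still have vertices of very small degree, and bounding $\ex(n,H_{s,t})-\ex(n-1,H_{s,t})$ at this stage would be circular. The paper instead passes to a subgraph of large minimum degree by iterated vertex deletion (Lemma~\ref{Lemma:delete to get large minidegree}), and only then applies the cut/degree estimates of Lemma~\ref{LEMMA: bipartition with bad vertices}; this is routine to add, but as written it is a gap. Second, even granting your structural claims, the equality analysis would still have to rule out extremal configurations split between the two parts and to pin the embedded graph inside one partite set (the paper's Claims~\ref{CLAIM:Dta and Nu attaining max} and \ref{CLAIM:e0e1=0}), which your outline does not address.
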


A parameter related to Tur\'{a}n number is the so called decomposition number. Given two graphs $G$ and $H$, an {\em $H$-decomposition} of $G$ is a partition of edges of $G$ such that every part is a single edge or forms a graph isomorphic to $H$. Let $\phi(G,H)$ be the smallest number of parts in an $H$-decomposition of $G$. Clearly, if $H$ is non-empty, then
$$\phi(G,H)=e(G)-p_{H}(G)(e(H)-1),$$
where $p_{H}(G)$ is the maximum number of edge-disjoint copies of $H$ in $G$. Define
$$\phi(n,H)=\max\{\phi(G,H):|V(G)|=n \}.$$

This function, motivated by the problem of representing graphs by set intersections, was first studied by Erd\H{o}s, Goodman and P\'{o}sa \cite{intersections}, they proved that $\phi(n,K_3)=\ex(n,K_3)$. The result was generalized to $\phi(n,K_r)=\ex(n,K_r)$, for all $n\geqs{r}\geqs3$ by Bollob\'{a}s~\cite{Bollobas}.
More generally,  Pikhurko and Sousa~\cite{general} proposed the following conjecture.
\begin{conj}[\cite{general}]\label{CONJ: Pikhurko and Sousa}
	For any graph $H$ with $\chi(H)\geqs3$, there is an integer $n_0=n_0(H)$ such that $\phi(n,H)=\ex(n,H)$ for all $n\geqs{n_0}$.
\end{conj}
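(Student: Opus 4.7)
The plan is to verify both inequalities for $n$ sufficiently large. The lower bound $\phi(n,H)\geqs\ex(n,H)$ is immediate: any extremal $H$-free graph $G^*$ on $n$ vertices has $p_H(G^*)=0$, so $\phi(G^*,H)=e(G^*)=\ex(n,H)$. The main task is the upper bound: every $n$-vertex graph $G$ must admit an $H$-decomposition with at most $\ex(n,H)$ parts. Since $\phi(G,H)=e(G)-p_H(G)(e(H)-1)$, this is equivalent to
\[ p_H(G)(e(H)-1)\geqs e(G)-\ex(n,H), \]
which is nontrivial only when $e(G)>\ex(n,H)$.

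Set $r=\chi(H)-1\geqs2$. I would follow the stability-plus-greedy paradigm that underlies every currently resolved instance of the conjecture. First invoke the Erd\H{o}s--Simonovits stability theorem: for every $\alpha>0$, any $H$-free graph on $n$ vertices with at least $\ex(n,H)-\alpha n^2$ edges differs from $T_{n,r}$ by at most $\alpha n^2$ edge modifications. Then split the argument into two regimes according to how close $G$ is to $T_{n,r}$ in edit distance.

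In the ``far'' regime, a standard supersaturation/removal argument produces $\Omega(n^2)$ edge-disjoint copies of $H$ via repeated greedy extraction, comfortably exceeding the surplus $e(G)-\ex(n,H)=O(n^2)$ that needs to be absorbed. In the ``near'' regime, fix an approximate $r$-partition $V_1\dot\cup\cdots\dot\cup V_r$ of $V(G)$ that maximizes the cross edges and call the edges inside the parts \emph{surplus}. Greedily extract edge-disjoint copies of $H$, each consuming at least one surplus edge, by embedding the chromatic-critical part of $H$ on a chosen surplus edge and the remainder of $H$ into the almost-complete $r$-partite cross structure. When the process halts, the remaining graph $G'$ must be $H$-free with $e(G')\leqs\ex(n,H)$, and the count of extracted copies delivers the required bound on $p_H(G)$.

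The hard part will be the near-extremal step, and it is precisely here that the conjecture remains unresolved in general. Success requires (i) a sharp structural description of (near-)extremal $H$-free graphs and (ii) an ``embedding gadget'' guaranteeing that every surplus edge can host a copy of $H$ edge-disjoint from those already removed, despite the bounded room at high-in-degree vertices. Both ingredients have been supplied in every resolved instance---complete graphs (Bollob\'{a}s), edge-critical graphs (Pikhurko--Sousa), $H_{s,0}$ (Liu--Sousa), and the family $H_{s,t}$ treated in Theorem~\ref{THM:MAIN THEOREM 1}---but no uniform version is known for arbitrary $H$ with $\chi(H)\geqs3$. Proving Conjecture~\ref{CONJ: Pikhurko and Sousa} in full therefore appears to require either a general Tur\'{a}n-type structural classification replacing (i), or an abstract edge-decomposition mechanism (e.g., via modern absorbing or design-theoretic techniques) that bypasses the need for detailed extremal information.
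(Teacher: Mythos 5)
The statement you were asked to prove is Conjecture~\ref{CONJ: Pikhurko and Sousa}, which is an open conjecture: the paper does not prove it, and no proof is known. The paper only verifies the special case $H=H_{s,t}$ (Theorem~\ref{THM:MAIN THEOREM 2}), so there is no ``paper's own proof'' of the full statement to compare against. Your proposal is accordingly not a proof but an outline, and you say so explicitly. The gap you name --- the near-extremal embedding step, requiring (i) a sharp description of the extremal $H$-free graphs and (ii) a gadget placing a copy of $H$ on each surplus edge --- is exactly where the general problem is stuck, and exactly the content the paper supplies for $H_{s,t}$: Lemma~\ref{LEMMA: Main technique} is the embedding gadget, Lemma~\ref{LEMMA: Force to extremal graph} is the structural endgame, and Algorithms 1 and 2 in Section 4 carry out the greedy extraction with each extracted copy consuming exactly $k$ surplus edges. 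Your lower-bound argument ($\phi(n,H)\geqs\ex(n,H)$ via an extremal $H$-free graph) is correct and is the same observation the paper relies on implicitly.

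One technical correction to the outline: you invoke Erd\H{o}s--Simonovits stability for ``$H$-free graphs with at least $\ex(n,H)-\alpha n^2$ edges,'' but a graph $G$ maximizing $\phi(G,H)$ need not be $H$-free, so that theorem does not apply to it. The tool actually required --- and the one the paper uses as Lemma~\ref{LEMMA: stability for phi}, due to \"{O}zkahya and Person --- is a stability statement under the hypothesis $\phi(G,H)\geqs\ex(n,H)-\delta n^2$. With that lemma your ``far'' regime is handled by contraposition rather than by supersaturation: if $G$ is far from $(\chi(H)-1)$-partite, then $\phi(G,H)<\ex(n,H)-\delta n^2$ and there is nothing to prove, so no count of edge-disjoint copies is needed there. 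These repairs still leave the near-extremal step open for general $H$, so the proposal correctly diagnoses, but does not close, the conjecture.
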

In~\cite{general}, Pikhurko and Sousa also proved that $\phi(n,H)=\ex(n,H)+o(n^2).$ The error term was improved to be $O(n^{2-\alpha})$ for some $\alpha>0$ by Allen, B\"{o}ttcher, and Person~\cite{ImprovedError}. Sousa verified the conjecture for some families of edge-critical graphs, namely, clique-extensions of order $r\geqs4$ $(n\geqs{r})$~\cite{clique-extension} and the cycles of length 5 $(n\geqs6)$~\cite{5-cycles} and 7 $(n\geqs10)$~\cite{7-cycle}.
In~\cite{edge-critical case}, \"{O}zkahya and Person verified the conjecture for all edge-critical graphs with chromatic number $r\geqs3$. Here, a graph $H$ is called {\em edge-critical}, if there is an edge $e\in{E(H)}$, such that $\chi(H)>\chi(H-e)$. For non-edge-critical graphs, Liu and Sousa~\cite{k-fan} verified the conjecture for $H_{k,0}$ and recently, the result was generalized to $F_{k,r}$ for all $k\geqs2$ and $r\geqs3$ by Hou et al.~\cite{Dec for Fkr}.



Our second main result verifies that Pikhurko and Sousa's conjecture is true for $H_{s,t}$ with $s\ge 0$ and $t\ge 1$.

\begin{thm}\label{THM:MAIN THEOREM 2}
	For any integer $s\geqs0,t\geqs1$ and for any $H_{s,t}\in\mathcal C_{s,t}$, there exists $n_2(H_{s,t})\in\mathbb{N}$ such that for all $n\geqs n_2(H_{s,t})$, $$\phi(n,H_{s,t})=\ex(n,H_{s,t}).$$ Moreover the only graphs attaining $\ex(n,H_{s,t})$ are members of $\mathcal{F}_{n,s,t}$.
\end{thm}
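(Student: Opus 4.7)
\noindent\textbf{Proof proposal for Theorem~\ref{THM:MAIN THEOREM 2}.}

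The plan is to deduce the decomposition result from the extremal and stability information contained in (the proof of) Theorem~\ref{THM:MAIN THEOREM 1}, following the Pikhurko-Sousa framework as adapted by Liu-Sousa~\cite{k-fan} and Hou et al.~\cite{Dec for Fkr} for non-edge-critical targets. The lower bound $\phi(n,H_{s,t})\geqs\ex(n,H_{s,t})$ is immediate, since any $G_0\in\mathcal F_{n,s,t}$ is $H_{s,t}$-free with $e(G_0)=\ex(n,H_{s,t})$, whence its only $H_{s,t}$-decomposition uses single edges only. Via the identity $\phi(G,H_{s,t})=e(G)-(e(H_{s,t})-1)\,p_{H_{s,t}}(G)$ together with the trivial $\phi(G,H_{s,t})\leqs e(G)$, the matching upper bound reduces to proving
$$p_{H_{s,t}}(G)\;\geqs\;\frac{e(G)-\ex(n,H_{s,t})}{e(H_{s,t})-1}\qquad(\star)$$
for every $n$-vertex graph $G$ with $e(G)>\ex(n,H_{s,t})$.

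I would split into two regimes governed by a small constant $\gamma=\gamma(H_{s,t})>0$. In the \emph{dense} regime $e(G)\geqs\ex(n,H_{s,t})+\gm n^2$, an Erd\H os-Simonovits supersaturation consequence of Theorem~\ref{THM:MAIN THEOREM 1} supplies $\Omega(n^{v(H_{s,t})})$ copies of $H_{s,t}$ in $G$; applying a Pippenger-Spencer-type near-perfect matching to the $H_{s,t}$-hypergraph on $E(G)$ (whose pair-codegrees are $o$ of the degrees) yields $(1-o(1))\,e(G)/e(H_{s,t})$ pairwise edge-disjoint copies, which comfortably exceeds the right-hand side of $(\star)$ once $n$ is large. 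In the \emph{near-extremal} regime $\ex(n,H_{s,t})<e(G)<\ex(n,H_{s,t})+\gm n^2$, stability (extracted from the proof of Theorem~\ref{THM:MAIN THEOREM 1}) furnishes a max cut $V(G)=V_1\dot\cup V_2$ with $|V_i|=n/2\pm O(\sqrt\gm\,n)$, and the \emph{bad}-edge count $b:=e(G[V_1])+e(G[V_2])$ satisfies
$$b\;\geqs\;e(G)-|V_1||V_2|\;\geqs\;(s+t-1)^2+(e(G)-\ex(n,H_{s,t})).$$

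The heart of the proof is an iterative construction of the required packing. As long as $b>(s+t-1)^2$, I would locate a copy $H'\subseteq G$ of $H_{s,t}$ using \emph{exactly} $s+t$ bad edges---one per odd cycle---together with $e(H_{s,t})-(s+t)$ crossing edges, add $H'$ to the packing, delete $E(H')$ from $G$, and repeat. Such a copy can be assembled because the crossing bipartite subgraph is essentially complete: each bad edge $uv$ can serve as the unique in-cut edge of an odd cycle of any prescribed length $\geqs3$ by closing it through a path of appropriate even length in the opposite side, and the $\Theta(n)$ independent choices at each cycle step absorb the $O_k(1)$ edges consumed by previous iterations (since $v(H_{s,t})\geqs5$ for $t\geqs1$, the total $O(n^2)$ edges removed across all iterations are negligible against the $\Theta(n^{v(H_{s,t})-2})$ candidate copies through any fixed edge). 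Terminating at $b\leqs(s+t-1)^2$ gives a packing of size
$$m^*\;\geqs\;\frac{b_{\text{orig}}-(s+t-1)^2}{s+t}\;\geqs\;\frac{e(G)-\ex(n,H_{s,t})}{s+t}\;\geqs\;\frac{e(G)-\ex(n,H_{s,t})}{e(H_{s,t})-1},$$
where the last inequality uses $s+t\leqs 3s+\sum_{i=1}^t q_i-1=e(H_{s,t})-1$ (valid because each $q_i\geqs5$ and $t\geqs1$). Combined with the trivial $\phi(G_{m^*},H_{s,t})\leqs e(G_{m^*})$ on the residual graph this yields $\phi(G,H_{s,t})\leqs e(G)-m^*(e(H_{s,t})-1)\leqs \ex(n,H_{s,t})$, establishing $(\star)$.

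The principal obstacle is the structural lemma underpinning the iteration: whenever $b>(s+t-1)^2$, a copy of $H_{s,t}$ using \emph{exactly} $s+t$ bad edges and edge-disjoint from all previous removals must exist. This is a quantitative strengthening of Theorem~\ref{THM:MAIN THEOREM 1} and demands a careful analysis of the bad-edge subgraph across the iteration---in particular, at the endpoint where $b$ is close to $(s+t-1)^2$, one must handle every bad-edge configuration compatible with the extremal family $\mathcal F_{n,s,t}$, including the exceptional $3K_3$ family that appears in the case $(s,t)=(3,1)$. Since the cumulative edge perturbation is $O(n^2)$, well inside the $O(\sqrt\gm\,n^2)$ slack afforded by stability, the near-extremal cut structure is preserved throughout and the structural lemma can be reapplied at each step. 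The ``moreover'' clause identifying the $\ex$-attaining graphs as $\mathcal F_{n,s,t}$ is immediate from Theorem~\ref{THM:MAIN THEOREM 1}.
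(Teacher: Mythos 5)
Your outline has two load-bearing steps that do not hold as stated, and the genuinely hard part of the theorem is exactly the piece you leave as ``the principal obstacle.'' First, the structural setup of your near-extremal regime is not available: for a graph $G$ with $e(G)>\ex(n,H_{s,t})$ you cannot invoke stability ``extracted from the proof of Theorem~\ref{THM:MAIN THEOREM 1},'' because the Erd\H{o}s--Simonovits stability (Lemma~\ref{LEMMA:partition}) applies only to \emph{$H_{s,t}$-free} graphs, and such a $G$ is never $H_{s,t}$-free; a graph with $e(G)$ slightly above $\ex(n,H_{s,t})$ can be very far from bipartite. This is why the paper works with the hypothesis $\phi(G,H_{s,t})\geqs\ex(n,H_{s,t})$ and uses the \"{O}zkahya--Person stability for $\phi$ (Lemma~\ref{LEMMA: stability for phi}) inside Lemma~\ref{LEMMA: bipartition with bad vertices}, together with Lemma~\ref{Lemma:phi case delete to get large minidegree} to force $\delta(G)\geqs\lf\frac n2\rf$. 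Second, your dense-regime argument is false: the hypergraph of copies of $H_{s,t}$ on $E(G)$ is nowhere near regular, and no $(1-o(1))e(G)/e(H_{s,t})$ packing need exist. For example, $T_{n,2}$ with a clique on $3\sqrt{\gm}\,n$ vertices embedded in one side has $e(G)\geqs\ex(n,H_{s,t})+\gm n^2$, yet every copy of the non-bipartite graph $H_{s,t}$ uses an edge inside a part, so there are at most $O(\gm n^2)\ll e(G)/e(H_{s,t})$ edge-disjoint copies; the required packing inequality does hold there, but only by counting copies against inside edges (the paper's Case 1), not by a nibble.

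The central gap, however, is the claim driving your iteration: that whenever the number of inside edges of the \emph{current, already depleted} graph exceeds $(k-1)^2$ (with $k=s+t$), one can extract a copy of $H_{s,t}$ using exactly $k$ inside edges. This is precisely the content of Lemmas~\ref{LEMMA: Main technique} and~\ref{LEMMA: Force to extremal graph}, and the latter produces such a copy only while the current graph still satisfies $e\geqs e(T_{n,2})+(k-1)^2$ and has near-complete cross degrees; after your deletions both hypotheses can fail. The natural fallback ``if the iteration is stuck, then $e(G^j)\leqs\ex(n,H_{s,t})$'' only yields $j\geqs(e(G)-\ex(n,H_{s,t}))/e(H_{s,t})$ copies, strictly weaker than the needed $(e(G)-\ex(n,H_{s,t}))/(e(H_{s,t})-1)$. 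The paper closes exactly this gap with a different device in its Case 2 ($m\leqs C(H_{s,t})$): each iteration also deletes a vertex of the larger class of degree at most $\lf\frac{n-j}{2}\rf$ (an isolated vertex of the inside graph) or the center (in-degree at most $k-1$), and proves by induction the stronger statement $p_{H_{s,t}}(G^j)\geqs\frac{e(G^j)-\ex(n-j,H_{s,t})}{e(H_{s,t})-1}$, the drop of the benchmark $\ex(n-j,H_{s,t})$ by about $\lf\frac{n-j}{2}\rf$ supplying the missing additive slack; the terminal graph is handled by Lemma~\ref{LEMMA: Force to extremal graph}, which also delivers the uniqueness clause (your appeal to Theorem~\ref{THM:MAIN THEOREM 1} alone does not treat graphs with $e(G)>\ex(n,H_{s,t})$ and $\phi(G,H_{s,t})=\ex(n,H_{s,t})$). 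In the complementary regime of many inside edges, the paper must also pre-trim the high in-degree vertices $B$ and track active/inactive vertices so that a vertex appearing in linearly many iterations still retains cross-degree at least $n/9$; your remark that $\Theta(n)$ choices absorb the $O_k(1)$ edges consumed per iteration does not address a single vertex losing $\Theta(n)$ cross edges over the course of the algorithm. Without these ingredients the proposal does not close.
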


The remaining of the paper is arranged as follows. Section 2 gives all the technical lemmas we need. Sections 3 and 4 give the proofs of Theorems~\ref{THM:MAIN THEOREM 1} and~\ref{THM:MAIN THEOREM 2}, respectively.

\section{Lemmas}
The following two lemmas due to Chav\'atal and Hanson~\cite{Hanson} and Erd\H{o}s et al.~\cite{Erdos-95} are used to evaluate the maximum number of edges of a graph with given maximum degree and matching number.
\begin{lem}[\cite{Hanson}]\label{LEMMA:f_Nu_Delta} For any graph $G$ with maximum degree $\Dta\geqs1$ and matching number $\nu\geqs1$, we have $e(G)\leqs f(\nu,\Delta)=\nu\Dta+{\lf\frac{\Dta}{2}\rf}{\lf\frac{\nu}{\lc\Dta/2\rc}\rf}\leqs \nu(\Delta+1)$.
\end{lem}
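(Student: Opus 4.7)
The plan is to prove the two inequalities separately. The easier bound $e(G)\leqs \nu(\Dta+1)$ follows from Vizing's theorem: $G$ admits a proper edge-coloring with at most $\Dta+1$ color classes, each of which is a matching and so has size at most $\nu$, yielding $e(G)\leqs \nu(\Dta+1)$ at once.

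For the sharp bound $e(G)\leqs f(\nu,\Dta)$, I would fix a maximum matching $M=\{u_iv_i:1\leqs i\leqs \nu\}$ and let $A=V(M)$, $U=V(G)\setminus A$. Maximality of $M$ forces $U$ to be independent, so every edge of $G$ meets $A$. A first double counting---writing $a_i=\deg(u_i)+\deg(v_i)-1\leqs 2\Dta-1$ for the number of edges of $G$ incident to $\{u_i,v_i\}$, and observing $\sum_i a_i=e(G)+e(G[A])-\nu$---gives the preliminary estimate $e(G)\leqs 2\nu\Dta-e(G[A])$, already within an additive $O(\Dta)$ of $f(\nu,\Dta)$ but missing the precise floor correction.

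To produce the exact term $\lf\Dta/2\rf\lf\nu/\lc\Dta/2\rc\rf$ I would induct on $\nu$ in blocks of $\lc\Dta/2\rc$ matching edges. The key step is to locate $\lc\Dta/2\rc$ matching edges whose combined endpoint set $B$ spans at most $\binom{\Dta+1}{2}$ edges of $G$ (a $K_{\Dta+1}$-type block), delete $B$, and apply the inductive hypothesis to the remaining graph, whose matching number drops by exactly $\lc\Dta/2\rc$ and whose maximum degree remains bounded by $\Dta$. Existence of such a block is forced when $e(G)$ is close to $f(\nu,\Dta)$: otherwise the non-existence of $M$-augmenting paths---which prevents a short alternating trail through two distinct matching pairs from being closed up---would force at least $\lf\Dta/2\rf$ edges to be missing across blocks, matching the gap we need.

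The main obstacle is precisely this block-extraction step: one must translate the absence of $M$-augmenting paths into a clean structural statement that some $\Dta+1$ vertices covering $\lc\Dta/2\rc$ matching edges induce a (near-)$K_{\Dta+1}$, and handle the residual $r=\nu-\lc\Dta/2\rc\lf\nu/\lc\Dta/2\rc\rf$ matching edges as a base case via direct inspection on $O(\Dta)$ vertices. Tightness of $f(\nu,\Dta)$ is then verified by an explicit construction---$\lf\nu/\lc\Dta/2\rc\rf$ disjoint blocks (each a $K_{\Dta+1}$ when $\Dta$ is even, or a $K_{\Dta+1}$ enhanced by a short pendant gadget contributing $\lf\Dta/2\rf$ extra edges when $\Dta$ is odd) together with a residual component for the $r$ leftover matching edges---which achieves $f(\nu,\Dta)$ exactly, confirming that neither the induction nor the final $\nu(\Dta+1)$ estimate can be improved in general.
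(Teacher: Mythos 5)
You should first note that the paper does not prove this lemma at all: it is quoted verbatim from Chv\'atal and Hanson and used as a black box, so your attempt has to be judged on its own. The parts you actually establish are the easy ones: the Vizing argument gives $e(G)\leq\nu(\Delta+1)$, the numerical inequality $f(\nu,\Delta)\leq\nu(\Delta+1)$ is immediate from $\lfloor\Delta/2\rfloor\leq\lceil\Delta/2\rceil$, and your double count over the pairs of a maximum matching (using that the uncovered vertices form an independent set) is correct but, as you say, only gives $e(G)\leq 2\nu\Delta-e(G[A])$. The core inequality $e(G)\leq f(\nu,\Delta)$ is exactly the content of the Chv\'atal--Hanson theorem, and your block-extraction induction does not prove it.

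Concretely, three things break. First, the ``key step'' of finding $\lceil\Delta/2\rceil$ matching edges whose endpoint set induces a near-$K_{\Delta+1}$ is precisely the hard part, and you only assert it (``the main obstacle''); no argument is given that the absence of augmenting paths forces such a block, and extremal configurations need not contain one: for $\nu$ small the star $K_{1,\Delta}$ is extremal with no clique block, and for odd $\Delta$ an extremal block must carry $\lceil\Delta/2\rceil\Delta+\lfloor\Delta/2\rfloor>\binom{\Delta+1}{2}$ edges, so it is not even a subgraph of $K_{\Delta+1}$. Second, even granting the block $B$, the bookkeeping is wrong: bounding $e(G[B])$ by $\binom{\Delta+1}{2}$ and then inducting on $G-B$ ignores all edges between $B$ and the rest of the graph (in extremal examples most edges run from matched vertices into the independent set $U$), and it is not justified that the matching number of $G-B$ drops by exactly $\lceil\Delta/2\rceil$ rather than less. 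Third, your tightness construction for odd $\Delta$ (``$K_{\Delta+1}$ enhanced by a pendant gadget'') cannot exist as described, since every vertex of $K_{\Delta+1}$ already has degree $\Delta$, so nothing can be attached without violating the degree bound. So what you have is a proof of the weak bound $e(G)\leq\nu(\Delta+1)$ only; for the sharp bound one either reproduces the genuinely nontrivial argument of Chv\'atal--Hanson (or a later short proof via the Gallai--Edmonds/Berge--Tutte structure), or does what the paper does and cites it.
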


\begin{lem}[\cite{Erdos-95}]\label{LEMMA: Sum min deg b leqs}
	Let $H$ be a graph with maximum degree $\Delta$ and matching number $\nu$ and let $b$ be a nonnegative integer such that $b\leqs\Delta(H)-2$. Then$$\sum_{x\in V(H)}\min\{\deg_H(x),b\}\leqs\nu(\Delta+b).$$
\end{lem}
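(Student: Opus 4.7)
The plan is a maximum-matching argument on $H$. Let $M$ be a maximum matching with $|M| = \nu$, and set $S := V(M)$ and $T := V(H) \setminus S$. By the maximality of $M$, the set $T$ is independent in $H$, so every edge of $H$ has at least one endpoint in $S$.

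The structural input is the standard augmenting-path observation: for each matching edge $uv \in M$, one cannot have distinct $x \in N(u) \cap T$ and $y \in N(v) \cap T$, since otherwise $x$-$u$-$v$-$y$ would be an augmenting path of length three, contradicting the maximality of $M$. Hence for each $uv \in M$, either $N(u) \cap T = \emptyset$, or $N(v) \cap T = \emptyset$, or $N(u) \cap T = N(v) \cap T$ is a singleton. In particular $|N(u) \cap T| + |N(v) \cap T| \leqs \Delta - 1$ for each $uv \in M$, and summing gives $e_H(S, T) \leqs \nu(\Delta - 1)$.

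Writing $d'(x) := \min\{\deg_H(x), b\}$, I would split $\sum_{x \in V(H)} d'(x) = \Sigma_S + \Sigma_T$. Immediately $\Sigma_S \leqs 2\nu b$ (from $|S| = 2\nu$ and each summand at most $b$) and $\Sigma_T \leqs \sum_{z \in T} \deg_H(z) = e_H(S, T) \leqs \nu(\Delta - 1)$; naively these combine only to $\nu(\Delta + 2b - 1)$, overshooting the target $\nu(\Delta + b)$ by $\nu(b - 1)$ as soon as $b \geqs 2$.

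Closing this gap is the main obstacle, and this is where the hypothesis $b \leqs \Delta - 2$ enters. The idea is an amortized charging scheme: charge $u, v$ to the matching edge $uv$, and charge each $z \in T$ with $\deg_H(z) \geqs 1$ to some matching edge containing an $S$-neighbor of $z$. The extreme bad configuration is a star-like one in which $\deg_H(u) = \Delta$ and $N(u) \cap T$ consists of $\Delta - 1$ pendant leaves; then $N(v) \cap T = \emptyset$ by the dichotomy, so $v$'s remaining $\deg(v) - 1$ neighbors all lie in $S$, on distinct other matching edges. Each of those companion matching edges carries slack at least $\Delta - b \geqs 2$ relative to the per-edge target $\Delta + b$, which is precisely enough to absorb the at most $b - 1$ units of excess at $uv$ once one redistributes $d'(v) - 1$ of its contribution outward. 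A case analysis on $(\deg(u), \deg(v))$ together with the augmenting-path dichotomy should show the redistribution can be done globally without cascading overflow, yielding the per-matching-edge bound $\Delta + b$ and hence $\sum_x d'(x) \leqs \nu(\Delta + b)$; verifying the no-cascading-overflow claim uniformly across all degree configurations is the delicate step.
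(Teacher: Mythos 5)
The paper offers no proof of this lemma at all: it is quoted directly from Erd\H{o}s, F\"uredi, Gould and Gunderson \cite{Erdos-95}, so your attempt has to be judged on its own terms rather than against an argument in the text. Your setup is sound as far as it goes: taking a maximum matching $M$, noting that $T=V(H)\setminus S$ is independent, and using length-three augmenting paths to get the per-edge dichotomy and hence $e_H(S,T)\leqs\nu(\Delta-1)$ are all correct (the only quibble, the case where $N(u)\cap T=N(v)\cap T$ is a singleton and $\Delta=2$, occurs only when $b=0$, where the lemma is trivial). But, as you yourself observe, this machinery only yields $\nu(\Delta+2b-1)$, and the entire content of the lemma is the missing saving of $\nu(b-1)$.

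The charging scheme you sketch to recover that saving is not a proof, and the obstacle you flag --- ``no cascading overflow'' --- is genuine and cannot be dispatched with the tools you have put on the table. The length-three augmenting-path dichotomy is purely local to one matching edge, whereas the threatening configurations are global. Concretely: if $u_iv_i\in M$ has $|N(u_i)\cap T|=\Delta-1$ and $\deg(v_i)\geqs 2$, the local contribution is $b+\min\{\deg(v_i),b\}+(\Delta-1)$, which exceeds $\Delta+b$ as soon as $\deg(v_i)\geqs 2$ and must be pushed onto a matching edge $u_jv_j$ meeting $N(v_i)$; to certify that $u_jv_j$ has the slack you claim, one must show $N(v_j)\cap T=\emptyset$, which requires forbidding a length-five augmenting path $z,u_i,v_i,u_j,v_j,z'$, and the edge $u_jv_j$ may itself be in the bad configuration, forcing control of augmenting paths of unbounded length (equivalently, some global structure of maximum matchings, e.g.\ Gallai--Edmonds, or an induction that deletes a star and re-chooses the matching). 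None of this is carried out; ``a case analysis \dots should show'' is precisely where the proof is missing. As written, the proposal identifies the right battlefield but establishes only the weaker bound $\nu(\Delta+2b-1)$.
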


The following two stability lemmas due to Erd\H{o}s~\cite{Erdos-66}, Simonovits~\cite{Simonovits-66},  \"{O}zkahya and Person \cite{edge-critical case} play  an important role to determine the Tur\'an number and decomposition number of a given graph $H$.

\begin{lem}[\cite{Erdos-66,Simonovits-66}]\label{LEMMA:partition} Let $H$ be a graph with $\chi(H)=r\geqs3$ and $H\neq{K_r}$. Then, for every $\gamma>0$, there exists $\delta>0$ and $n_0=n_0(H,\gamma)\in\mathbb{N}$ such that the following holds. If $G$ is an $H$-free graph on $n\geqs{n_0}$ vertices with $e(G)\geqs\ex(n,H)-\delta{n^2},$ then there exists a partition of $V(G)=V_1\dot{\cup}\cdots\dot{\cup}V_{r-1}$ such that $\sum^{r-1}_{i=1}e(V_i)<\gamma n^2$.
\end{lem}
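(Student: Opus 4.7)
The plan is to apply the Szemer\'edi regularity lemma to $G$ with carefully chosen parameters, build the reduced graph, prove it is $K_r$-free by an embedding argument, apply Tur\'an-type stability on the reduced graph, and finally lift the resulting $(r-1)$-partition back to $V(G)$.

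First I would fix a hierarchy of constants $1/n_0 \ll 1/k_0 \ll \varepsilon \ll d \ll \delta \ll \gm$, all of them much smaller than $1/|V(H)|$. Applying the regularity lemma with parameter $\varepsilon$ to $G$ produces an $\varepsilon$-regular equipartition $V(G) = U_0 \dot{\cup} U_1 \dot{\cup} \cdots \dot{\cup} U_k$ with $k_0 \leqs k \leqs K(\varepsilon,k_0)$ and $|U_0| \leqs \varepsilon n$. Define the reduced graph $R$ on $[k]$ by declaring $i \sim j$ iff $(U_i,U_j)$ is $\varepsilon$-regular with density $d(U_i,U_j) \geqs d$. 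Standard accounting shows that the edges of $G$ not captured by pairs corresponding to edges of $R$ (edges incident with $U_0$, within-cluster edges, and edges in irregular or low-density pairs) number at most $(2\varepsilon + d + 1/k)n^2$. Combined with $e(G) \geqs \ex(n,H) - \delta n^2 \geqs \bigl(1 - \tfrac{1}{r-1}\bigr)\tfrac{n^2}{2} - \delta n^2 + o(n^2)$ (using Erd\H{o}s--Stone), this yields
\[
e(R) \geqs \left(1-\frac{1}{r-1}\right)\binom{k}{2} - \gm_1 k^2,
\]
where $\gm_1$ depends only on $\varepsilon, d, \delta$ and can be made arbitrarily small.

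Next I would show $R$ is $K_r$-free. If $R$ contained a $K_r$ on clusters $U_{i_1},\ldots,U_{i_r}$, then the Key (embedding) Lemma for $\varepsilon$-regular pairs with densities at least $d \gg \varepsilon$ allows us to embed any fixed graph of chromatic number at most $r$ and order at most $|V(H)|$ into $U_{i_1}\cup\cdots\cup U_{i_r}$ (since $H$ is a subgraph of the blow-up $K_r(|V(H)|)$). In particular $H$ embeds into $G$, contradicting $H$-freeness. So $R$ is $K_r$-free, and together with the lower bound on $e(R)$, Tur\'an stability for $K_r$ (provable elementarily by Zykov symmetrization) yields a partition $[k] = I_1 \dot{\cup} \cdots \dot{\cup} I_{r-1}$ with $\sum_{j=1}^{r-1} e_R(I_j) \leqs \gm_2 k^2$ for an arbitrarily small $\gm_2 = \gm_2(\gm_1)$.

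Finally, set $V_j := \bigcup_{i \in I_j} U_i$ for $j=1,\ldots,r-1$, distributing $U_0$ into $V_1$. Edges of $G$ inside some $V_j$ fall into four classes: (a) within-cluster edges, at most $k \binom{n/k}{2} \leqs n^2/(2k)$; (b) edges incident with $U_0$, at most $\varepsilon n^2$; (c) edges in irregular or low-density pairs within a class, at most $(\varepsilon + d)n^2$; (d) edges in dense regular pairs between clusters of the same class, at most $\gm_2 k^2 \cdot (n/k)^2 = \gm_2 n^2$. Choosing the hierarchy so each of these is below $\gm n^2/4$ gives $\sum_j e(V_j) < \gm n^2$. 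The main obstacle is the bookkeeping with constants: $\delta$ must be chosen small enough, as a function of $\gm$ and of the parameters generated by the regularity lemma, so that the accumulated losses from regularity ($\varepsilon$), thresholding ($d$), Tur\'an stability ($\gm_2$), and the exceptional set $U_0$ remain strictly below $\gm n^2$, and the dependence of $K$ on $\varepsilon$ does not force $1/k$ to overwhelm the bound.
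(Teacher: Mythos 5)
This lemma is not proved in the paper at all: it is quoted as the classical Erd\H{o}s--Simonovits stability theorem from the 1966/68 papers, whose original arguments are elementary (degree counting and ``progressive induction''), with no regularity lemma available at the time. Your regularity-lemma derivation is a legitimate and by now standard alternative route, and the main steps are sound: the accounting that bounds the edges lost to $U_0$, within-cluster pairs, irregular pairs and sparse pairs by $(2\varepsilon+d+1/k)n^2$; the lower bound $e(G)\ge e(T_{n,r-1})-\delta n^2$ (note you only need that $T_{n,r-1}$ is $H$-free because $\chi(H)=r$, not Erd\H{o}s--Stone); the deduction that the reduced graph $R$ is $K_r$-free via the embedding (Key) lemma, since $H\subseteq K_r(|V(H)|)$ and a $K_r$ in $R$ with $\varepsilon\ll d,1/|V(H)|$ would embed $H$ into $G$; and the lifting of the partition of $R$ back to $V(G)$ with the four error classes each controlled by the hierarchy. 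Two points deserve more care. First, the step ``Tur\'an stability for $K_r$, provable elementarily by Zykov symmetrization'' is itself a nontrivial ingredient: symmetrization proves Tur\'an's theorem exactly, and extracting the stability statement from it (or from a minimum-degree/induction argument, or F\"uredi's short proof) requires a genuine additional argument or a citation; as written this is the one link in your chain that is asserted rather than proved, though it is a well-known special case and logically weaker than the full lemma, so there is no circularity. Second, the quantification must be checked: $\delta$ and $n_0$ may depend only on $H$ and $\gamma$, which your hierarchy $\varepsilon\ll d\ll\delta\ll\gamma$ together with the regularity bound $k\le K(\varepsilon,k_0)$ does deliver, but this is exactly the bookkeeping you flag and should be carried out explicitly. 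In summary, your approach buys a conceptually clean proof from off-the-shelf regularity machinery, at the cost of enormous constants $n_0$; the cited original proof is regularity-free and gives much better dependence, which is irrelevant for the paper's purposes since the lemma is only used qualitatively.
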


\begin{lem}[\cite{edge-critical case}]\label{LEMMA: stability for phi}Let $H$ be a graph with $\chi(H)=r\geqs3$ and $H\neq{K_r}$. Then, for every $\gamma>0$, there exists $\delta>0$ and $n_0=n_0(H,\gamma)\in\mathbb{N}$ such that the following holds. If $G$ is a graph on $n\geqs{n_0}$ vertices with $\phi(G,H)\geqs\ex(n,H)-\delta{n^2},$ then there exists a partition of $V(G)=V_1\dot{\cup}\cdots\dot{\cup}V_{r-1}$ such that $\sum^{r-1}_{i=1}e(V_i)<\gamma n^2$.	
\end{lem}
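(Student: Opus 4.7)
The plan is to reduce the statement to the classical Erd\H{o}s--Simonovits stability lemma (Lemma~\ref{LEMMA:partition}) by extracting from $G$ a dense $H$-free subgraph and transferring its good partition back to $G$. The starting point is the identity
$$\phi(G,H) = e(G) - p_H(G)(e(H)-1),$$
recorded in the introduction, where $p_H(G)$ denotes the maximum number of pairwise edge-disjoint copies of $H$ in $G$.

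The first step will be to fix a maximum edge-disjoint packing of $p := p_H(G)$ copies of $H$ in $G$ and delete exactly one edge from each copy, producing a spanning subgraph $G^\ast$ of $G$ with $e(G^\ast)=e(G)-p$. By maximality of the packing, $G^\ast$ must be $H$-free: any copy of $H$ in $G^\ast$ would be edge-disjoint from the packing and could be used to enlarge it, a contradiction. Consequently $e(G^\ast)\leqs\ex(n,H)$, and substituting
$$e(G^\ast)=e(G)-p=\phi(G,H)+p(e(H)-2)$$
together with the hypothesis $\phi(G,H)\geqs\ex(n,H)-\delta n^2$ gives the key inequality $p(e(H)-2)\leqs\delta n^2$. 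Since $\chi(H)\geqs3$ forces $H$ to contain an odd cycle, we have $e(H)\geqs3$, and this yields the crucial bound $p\leqs\delta n^2$.

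The second step is to apply Lemma~\ref{LEMMA:partition} to $G^\ast$. Since $G^\ast$ is $H$-free with $e(G^\ast)\geqs\phi(G,H)\geqs\ex(n,H)-\delta n^2$, choosing the input $\delta$ of the present lemma small enough (at most the $\delta$ returned by Lemma~\ref{LEMMA:partition} on input parameter $\gm/2$) produces a partition $V(G)=V_1\dot{\cup}\cdots\dot{\cup}V_{r-1}$ satisfying $\sum_{i=1}^{r-1}e_{G^\ast}(V_i)<(\gm/2)n^2$. The same partition applied to $G$ picks up at most the $p\leqs\delta n^2$ deleted edges, so if we also impose $\delta<\gm/2$ we obtain
$$\sum_{i=1}^{r-1}e_G(V_i)\leqs\sum_{i=1}^{r-1}e_{G^\ast}(V_i)+p<\frac{\gm}{2}n^2+\delta n^2<\gm n^2,$$
as required.

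The only substantive step is the bound $p\leqs\delta n^2$, which rests entirely on $e(H)\geqs3$ and the maximality argument for $G^\ast$; everything else is bookkeeping. The main (mild) obstacle will simply be organising the cascade of constants so that the final $\delta$ depends only on $\gm$ and $H$, which amounts to invoking Lemma~\ref{LEMMA:partition} first to fix its own $\delta(H,\gm/2)$ and then setting our $\delta$ to be the minimum of that value and $\gm/2$.
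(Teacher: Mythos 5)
Your reduction has a genuine gap at its first and most important step. You delete one edge from each copy of a maximum $H$-packing and claim the resulting graph $G^\ast$ is $H$-free ``by maximality of the packing''. This is false: a copy of $H$ in $G^\ast$ only has to avoid the $p$ deleted edges, so it may reuse the surviving edges of one or several packing copies; such a copy is not edge-disjoint from the packing and therefore cannot be used to enlarge it. A minimal counterexample is $H=K_3$ and $G=K_4$: the maximum triangle packing has $p=1$, yet $K_4$ minus a single edge still contains triangles. Consequently the inequality $e(G^\ast)\leqs\ex(n,H)$ is unjustified, and with it the bound $p(e(H)-2)\leqs\delta n^2$ --- which you yourself identify as the only substantive step --- collapses. (The variant in which one deletes \emph{all} edges of the packing does yield an $H$-free graph $G'$, but then $e(G')=\phi(G,H)-p$, and to keep $G'$ dense enough for Lemma~\ref{LEMMA:partition} one would again need an upper bound on $p$, which is exactly what is missing.)

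The bound you are after --- that $\phi(G,H)\geqs\ex(n,H)-\delta n^2$ forces $p_H(G)$, and hence $e(G)-\ex(n,H)$, to be $O(\delta n^2)+o(n^2)$ --- is essentially the content of Pikhurko and Sousa's theorem that $\phi(n,H)=\ex(n,H)+o(n^2)$. Its known proofs obtain a lower bound on the \emph{fractional} $H$-packing number of any graph with many edges and then invoke the Haxell--R\"odl theorem comparing fractional and integral packings; this cannot be replaced by a one-line maximality argument. This is presumably why the paper does not prove Lemma~\ref{LEMMA: stability for phi} at all but imports it verbatim from \cite{edge-critical case}. Your second step (running Lemma~\ref{LEMMA:partition} on a dense $H$-free subgraph and transferring the partition back to $G$, absorbing the few removed edges) is sound bookkeeping and would go through once a correct bound of the form $p_H(G)\leqs C\delta n^2+o(n^2)$ is available, but that bound is the heart of the matter and is not established by your argument.
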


The following lemma can be found in \cite{Ex for Ckq}.
\begin{lem}[Lemma 8 in~\cite{Ex for Ckq}]\label{Lemma:delete to get large minidegree}
	Let $n_0$ be an integer and let $G$ be a graph on $n\geqs n_0+{n_0\choose 2}$ vertices with $e(G)=e(T_{n,2})+j$ for some integer $j\geqs0$. Then $G$ contains a subgraph $G'$ on $n'> n_0$ vertices  such that $\delta(G')\geqs\delta(T_{n',2})$ and $e(G')\geqs e(T_{n',2})+j+n-n'$.
\end{lem}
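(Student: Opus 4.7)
The plan is a standard iterative vertex-deletion argument. I would set $G_0:=G$ and, while the current graph $G_i$ on $n-i$ vertices still contains a vertex $v$ with $\deg_{G_i}(v)<\delta(T_{n-i,2})$, delete such a $v$ to obtain $G_{i+1}$. Once no such vertex exists, stop and set $G':=G_i$, $n':=n-i$. By the stopping rule, the resulting $G'$ automatically satisfies $\delta(G')\geqs\delta(T_{n',2})$.

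The driving identity is that for every $m\geqs 2$,
$$e(T_{m,2})-e(T_{m-1,2})=\lf m/2\rf=\delta(T_{m,2}).$$
Hence deleting a single vertex of degree at most $\delta(T_{m,2})-1$ loses at most $\delta(T_{m,2})-1$ edges, which is exactly one fewer than the drop in the Tur\'an count. A short induction on $i$ then gives
$$e(G_i)\geqs e(T_{n-i,2})+j+i$$
throughout the procedure, so at termination $e(G')\geqs e(T_{n',2})+j+(n-n')$, as required.

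It remains to check that the process cannot run too long, i.e.\ $n'>n_0$. Suppose for contradiction that it runs all the way down to $n_0$ vertices. Combining the inductive bound at step $i=n-n_0$ with the trivial upper bound $e(G_{n-n_0})\leqs\binom{n_0}{2}$ gives
$$e(T_{n_0,2})+j+(n-n_0)\leqs\binom{n_0}{2},$$
which rearranges to $n\leqs n_0+\binom{n_0}{2}-e(T_{n_0,2})-j<n_0+\binom{n_0}{2}$ (using $e(T_{n_0,2})\geqs 1$ for $n_0\geqs 2$), contradicting the hypothesis $n\geqs n_0+\binom{n_0}{2}$.

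The only step that needs a bit of care is the Tur\'an-count identity $e(T_{m,2})-e(T_{m-1,2})=\lf m/2\rf$, which is precisely what makes the induction close with one extra edge to spare at every step. Beyond this bookkeeping there is no genuine obstacle; the argument is a routine minimum-degree extraction.
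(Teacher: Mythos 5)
Your proof is correct and follows exactly the standard argument: the paper itself imports this lemma from \cite{Ex for Ckq} without reproving it, but the proof it gives of the analogous Lemma~\ref{Lemma:phi case delete to get large minidegree} is the same iterative low-degree-vertex deletion, driven by the identity $e(T_{m,2})-e(T_{m-1,2})=\lf m/2\rf=\delta(T_{m,2})$ and terminated by the edge-count contradiction at $n_0$ vertices. No substantive difference.
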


The proof of the following lemma is almost the same as  the proof of Lemma 6 in~\cite{Dec for Fkr}, we give the proof here for completeness.
\begin{lem}\label{Lemma:phi case delete to get large minidegree}
	Let $n_0$ be an integer and $H$ be a given graph with $\chi(H)=r\ge 3$ and $\ex(n,H)-\ex(n-1,H)\geqs\delta(T_{r-1,n})$ for all $n\geqs n_0$.
 Let $G$ be a graph on $n> n_0+{n_0\choose 2}$ vertices with $\phi(G,H)=\ex(n,H)+j$ for some integer $j\geqs0$. Then $G$ contains a subgraph $G'$ on $n'> n_0$ vertices such that $\delta(G')\geqs\delta(T_{n',r-1})$ and $\phi(G',H)\geqs e(T_{n',r-1})+j+n-n'$.
\end{lem}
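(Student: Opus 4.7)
The plan is to iteratively remove vertices of small degree, mirroring the argument for Lemma~\ref{Lemma:delete to get large minidegree} in~\cite{Ex for Ckq}. Set $G_0 := G$, $n^{(0)} := n$ and, for $i\geqs 0$, build $G_{i+1} := G_i - v_i$ whenever there exists a vertex $v_i \in V(G_i)$ with $\deg_{G_i}(v_i) < \delta(T_{n-i,\,r-1})$. Stop at the first index $i^*$ at which no such vertex exists, and declare $G' := G_{i^*}$, $n' := n - i^*$; by construction, $\delta(G') \geqs \delta(T_{n',\,r-1})$.

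The key observation is the single-edge-extension inequality
\[
\phi(G_i, H) \leqs \phi(G_{i+1}, H) + \deg_{G_i}(v_i),
\]
which holds because any $H$-decomposition of $G_{i+1}$ yields one of $G_i$ by appending each edge incident to $v_i$ as its own single-edge part. Combining $\deg_{G_i}(v_i)\leqs \delta(T_{n-i,\,r-1})-1$ with the hypothesis $\ex(n-i, H) - \ex(n-i-1, H) \geqs \delta(T_{n-i,\,r-1})$ (valid whenever $n-i \geqs n_0$), a straightforward induction on $i$ preserves the invariant
\[
\phi(G_i, H) \geqs \ex(n-i, H) + j + i \quad \text{for every } 0\leqs i \leqs i^*.
\]
Since $\chi(H)=r$, the graph $T_{n',r-1}$ is $H$-free, so $\ex(n', H) \geqs e(T_{n',r-1})$, and the invariant at $i = i^*$ yields the claimed bound $\phi(G',H) \geqs e(T_{n',r-1}) + j + (n-n')$.

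It remains to confirm $n' > n_0$, i.e.\ $i^* < n - n_0$. Were the process to run all the way to $n-i = n_0$, the invariant would force
\[
\phi(G_{n-n_0}, H) \geqs \ex(n_0, H) + j + (n-n_0) \geqs n-n_0,
\]
while trivially $\phi(G_{n-n_0}, H) \leqs e(G_{n-n_0}) \leqs \binom{n_0}{2}$; this contradicts the standing assumption $n > n_0 + \binom{n_0}{2}$. Hence the iteration halts earlier, giving $n' > n_0$ as required. The only substantive step is the one-line decomposition inequality, which is immediate from the definition of $\phi$; the rest is bookkeeping, so I expect no real obstacle beyond faithfully tracking the calculation of Lemma~6 in~\cite{Dec for Fkr}.
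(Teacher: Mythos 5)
Your proof is correct and follows essentially the same vertex-deletion argument as the paper: repeatedly delete a vertex of degree below $\delta(T_{n-i,r-1})$, use $\phi(G_{i+1},H)\geqs\phi(G_i,H)-\deg_{G_i}(v_i)$ together with the hypothesis $\ex(n,H)-\ex(n-1,H)\geqs\delta(T_{n,r-1})$ to maintain the invariant, and rule out reaching $n_0$ vertices via $\phi\leqs e(G)\leqs\binom{n_0}{2}$. The only difference is cosmetic: you make explicit the final step $\ex(n',H)\geqs e(T_{n',r-1})$ (since $T_{n',r-1}$ is $H$-free), which the paper leaves implicit.
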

\begin{proof}
If $\delta(G)\ge \delta(T_{n,r-1})$, then $G$ is the desired graph and we have nothing to do. So assume that $\delta(G)<\delta(T_{n,r-1})$.
Let $v\in V(G)$ with $\deg_G(v)<\delta(T_{n,r-1})$ and set $G_1= G-v$. Then $\phi(G_1,H)\geqs\phi(G,H)-\deg_G(v)\geqs \ex(n,H)+j-\delta(T_{n,r-1})+1\geqs\ex(n-1, H)+j+1$, since $\ex(n,H)-\ex(n-1,H)\geqs\delta(T_{n,r-1})$. We may continue this procedure until we get a graph $G'$ on $n-i$ vertices with $\delta(G')\geqs\lf\frac{r-2}{r-1}(n-i)\rf$ for some $i<n-n_0$, or until $i=n-n_0$. But the latter case can not occur since $G'$ is a graph on $n_0$ vertices with $e(G')\geqs\phi(G',H)\geqs\ex(n_0,H)+j+i\geqs n-n_0>\binom{n_0}{2}$, which is impossible.
\end{proof}

The following observation was given in~\cite{Ex for Ckq}.
\begin{obs}[Observation 5 in~\cite{Ex for Ckq}]\label{OBS: o1}
Let $G$ be a graph with no isolated vertex. If $\Delta(G)\leqs 2$, then $$\nu(G)\geqs \frac{|V(G)|-\omega(G)}2,$$ where $\omega(G)$ is the number of components of $G$.
\end{obs}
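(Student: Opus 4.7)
The plan is a direct structural argument. Because $\Delta(G)\leqs 2$, every component of $G$ is either a path or a cycle; the hypothesis that $G$ has no isolated vertex simply ensures that each component has at least two vertices (and hence is a path $P_k$ on $k\geqs 2$ vertices or a cycle on $k\geqs 3$ vertices).

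The key observation is that a path or a cycle on $k$ vertices has matching number exactly $\lf k/2\rf \geqs (k-1)/2$: picking every other edge along the path, or around the cycle, saturates all but at most one vertex. Enumerate the components of $G$ as $D_1,\dots,D_{\omega(G)}$ and set $k_i=|V(D_i)|$. Since matchings in distinct components are vertex-disjoint, their union is a matching of $G$, so
$$\nu(G)\;=\;\sum_{i=1}^{\omega(G)}\nu(D_i)\;\geqs\;\sum_{i=1}^{\omega(G)}\frac{k_i-1}{2}\;=\;\frac{|V(G)|-\omega(G)}{2},$$
which is the desired inequality.

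There is no real obstacle here; the entire content is the characterization of graphs with maximum degree at most two as vertex-disjoint unions of paths and cycles, combined with the trivial matching bound on each component. If anything, the only subtlety worth remarking on is that the assumption of no isolated vertices is not strictly needed for the displayed inequality (an isolated vertex contributes $0$ to both $\nu(D_i)$ and $(k_i-1)/2$), but it is natural to include so that the bound is not vacuous in the applications.
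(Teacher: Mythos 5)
Your argument is correct: since $\Delta(G)\leqs 2$ forces every component to be a path or a cycle, the per-component bound $\nu(D_i)=\lf k_i/2\rf\geqs (k_i-1)/2$ and summing over components gives exactly the stated inequality, and your remark that isolated vertices would not hurt the inequality is also right. The present paper only quotes this observation from the cited reference without reproving it, and your proof is the same standard decomposition argument used there, so there is nothing further to reconcile.
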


The following is a technical lemma to determine the extremal graphs for intersecting odd cycles.
\begin{lem}\label{LEMMA:MAIN LEMMA}
Let $s\geqs0$ and $t\geqs1$ be two integers and $k=t+s$. Let $G$ be a graph with no isolated vertex and $\nu(G)\leqs k-1$. If for all $x\in V(G)$ with $\deg(x)\geqs s$,  we have $\deg(x)+\nu(G-N(x))\leqs k-1$, then $e(G)\leqs (k-1)^2$. Moreover, equality holds if and only if $G=K_{k-1,k-1}$ or $G=3K_3$, the latter case happens only if $s=3$ and $t=1$.
\end{lem}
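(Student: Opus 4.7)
The plan is to split on the size of $\Delta=\Delta(G)$ relative to $s$.

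If $\Delta\leq s-1$, then no vertex triggers the second hypothesis, so I use only $\nu(G)\leq k-1$. Lemma~\ref{LEMMA:f_Nu_Delta} gives $e(G)\leq\nu(\Delta+1)\leq (k-1)s$; since $t\geq 1$ forces $s\leq k-1$, this already yields $e(G)\leq(k-1)^2$. Equality requires $s=k-1$, $\nu=k-1$, $\Delta=k-2$, and the extra term $\lfloor(k-2)/2\rfloor\lfloor(k-1)/\lceil(k-2)/2\rceil\rfloor$ of Lemma~\ref{LEMMA:f_Nu_Delta} to reach $k-1$; a brief divisibility check forces $k$ even together with $(k-2)/2\mid k-1$, which isolates $k=4$. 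In that case $\Delta\leq 2$ and $\nu\leq 3$, so every component is a path or cycle, and the standard per-component bound $e_i\leq 2\nu_i+1$ (attained only by odd cycles) combined with Observation~\ref{OBS: o1} yields $e(G)\leq 2\nu(G)+\omega(G)\leq 9$, pinning the extremum to $3K_3$, which fits the lemma only for $(s,t)=(3,1)$.

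If $\Delta\geq s$, pick $x$ with $\deg(x)=\Delta$ and set $N=N(x)$. The hypothesis at $x$ gives $\nu(G-N(x))\leq k-1-\Delta$, in particular $\Delta\leq k-1$. Partition $E(G)$ into $(a)=e(N)$, $(b)=e(N,V\setminus N)$ and $(c)=e(G-N(x))$. Degree-summing inside $N$ gives $2(a)+(b)\leq|N|\Delta=\Delta^2$, hence $(a)+(b)\leq\Delta^2-e(N)$, while Lemma~\ref{LEMMA:f_Nu_Delta} applied to $G-N(x)$ (whose maximum degree is at most $\Delta$) gives $(c)\leq (k-1-\Delta)(\Delta+1)$. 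Summing,
\[
e(G)\leq (k-2)\Delta+(k-1)-e(N)\leq (k-1)^2,
\]
with equality forcing $\Delta=k-1$, $e(N)=0$, every vertex of $N$ of degree exactly $k-1$, and $V\setminus N(x)$ edgeless. Hence $G$ is bipartite with parts $N$ (of size $k-1$) and $\{x\}\cup Y$, where $Y=V\setminus N[x]$ is independent and each $u\in N$ is joined to $x$ and to exactly $k-2$ members of $Y$.

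To finish this case I invoke the hypothesis at each $u\in N$: since $\deg(u)=k-1\geq s$ (using $t\geq 1$), the constraint yields $\nu(G-N(u))=0$, so the bipartite subgraph between $N\setminus\{u\}$ and $Y\setminus N(u)$ contains no edge. This forces every $v\in N$ to have all its $k-2$ $Y$-neighbors inside the $(k-2)$-set $N(u)\cap Y$, and by cardinality the two neighborhoods coincide. Running over all $u\in N$ shows that every $N$-vertex has the same $Y$-neighborhood $Y^*$ of size $k-2$; any $y\in Y\setminus Y^*$ would then be isolated in $G$, so $Y=Y^*$ and $G=K_{k-1,k-1}$. I expect the main obstacle to be this last rigidity step — promoting a purely matching-based local constraint into the uniform complete bipartite structure — together with the bookkeeping needed to reconcile the two regimes at the borderline $(s,t)=(3,1)$, where both $K_{3,3}$ and $3K_3$ appear as extremal graphs.
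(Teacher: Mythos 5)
Your proof is correct, and it follows the same broad strategy as the paper --- a dichotomy on the maximum degree, the Chv\'atal--Hanson bound (Lemma~\ref{LEMMA:f_Nu_Delta}) in the small-degree regime, and a neighborhood-rigidity argument in the large-degree regime --- but with genuinely different bookkeeping. You split at $\Delta\le s-1$ versus $\Delta\ge s$, whereas the paper splits at $\Delta\le k-2$ versus $\Delta=k-1$ (the hypotheses force $\Delta\le k-1$). Because your second case also covers $s\le\Delta\le k-2$, you need the three-part decomposition $e(G)=e(N)+e(N,V\setminus N)+e(G-N(x))$ plus a second application of Lemma~\ref{LEMMA:f_Nu_Delta} to $G-N(x)$, giving $e(G)\le(k-2)\Delta+(k-1)-e(N)\le(k-1)^2$; the paper works only with $\Delta=k-1$, where $\nu(G-N(x))=0$ makes $V(G)\setminus N(x)$ independent and a direct degree count over $N(x)$ suffices. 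In the equality analysis your low-degree case extracts $s=k-1$ (hence $(s,t)=(3,1)$ once $k=4$) straight from the chain $e\le\nu(\Delta+1)\le(k-1)s$, while the paper first pins down $3K_3$ and only afterwards uses the degree hypothesis to rule out $s\le 2$; and your endgame invokes the hypothesis at every $u\in N$ to force all $Y$-neighborhoods to coincide, where the paper gets the same rigidity from a single contradiction at one neighbor $x_1$ to show $|V(G)\setminus N(x)|=k-1$. Both routes are sound: the paper's split is marginally shorter, while yours has the small advantages that the degree hypothesis is only ever invoked at vertices where $\deg\ge s$ is guaranteed by the case assumption, and that the possibility $G=3K_3$ with $s\le 2$ is excluded structurally (your Case~2 forces $K_{k-1,k-1}$) rather than by a separate check.
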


\begin{proof}
	Note that the conditions of the lemma imply that $\Delta(G)\le k-1$ and $k\geqs2$. 

\medskip	
\noindent{\bf Case 1.} $\Delta(G)\leqs k-2$.

\noindent
Then $k\geqs3$ in this case. By Lemma \ref{LEMMA:f_Nu_Delta},  we have $e(G)\leqs{f(k-1, k-2)}=(k-1)(k-2)+{\lf\frac{k-2}{2}\rf}{\lf\frac{k-1}{\lc (k-2)/2\rc}\rf}\le (k-1)^2$, and the equality holds only if $\nu=k-1$, $\Delta=k-2$ and $k=4$. Now assume that $e(G)=f(3, 2)=3^2=9$. Since $\Delta(G)\leqs 2$ and $G$ has no isolated vertex, $|V(G)|\geqs e(G)=9$ (the equality holds if and only if $G$ is 2-regular) and $\omega(G)\leqs \nu(G)=3$. By Observation~\ref{OBS: o1}, $$3=\nu(G)\geqs \frac{|V(G)|-\omega(G)}2\geqs \frac{|V(G)|-3}2.$$ Hence $|V(G)|\leqs 9$. Thus $|V(G)|=9$ (and so $G$ is 2-regular) and $\omega(G)=3$.  Therefore, $G=3K_3$. Then, for any $x\in{V(G)}$, $\deg(x)+\nu(G-N(x))=2+2>3=k-1$. So, by the condition of the lemma,  $2=\deg(x)<s=4-t\le 3$ (since $t\geqs1$). Therefore,   we must have $s=3$ and $t=1$.

\medskip	
\noindent{\bf Case 2.} $\Delta(G)=k-1$.

 \noindent
Choose $x\in{V(G)}$ such that $\deg(x)=k-1$. Then $\nu(G-N(x))=0$. Hence $e(G-N(x))=0$, that is $V(G)\setminus N(x)$ is an independent set of $G$. Let $N(x)=\{x_1,\cdots,x_{k-1}\}$. For each $i\in [1,k-1]$, denote $d_i=\deg(x_i)$ and $\tilde{d_i}=\deg_{G[N(x)]}(x_i)$. Then
	\begin{eqnarray*}
		e(G)&=&e(G[N(x)])+e(N(x),V(G)\setminus N(x))=\frac12\sum_{i=1}^{k-1}\tilde{d_i}+\sum_{i=1}^{k-1}(d_i-\tilde{d_i})\\
		&=&\sum_{i=1}^{k-1}d_i-\frac12\sum_{i=1}^{k-1}\tilde{d_i}\leqs (k-1)^2-\frac12\sum_{i=1}^{k-1}\tilde{d_i}\leqs (k-1)^2,
	\end{eqnarray*}
and  the equality holds if and only if $d_i=k-1$ and $\tilde{d_i}=0$ for each $i\in [1,k-1]$, that is $G$ is a bipartite graph with partite sets $N(x)=\{x_1, \cdots, x_{k-1}\}$ and $V(G)\setminus N(x)$. To show that $G=K_{k-1,k-1}$, it suffices to prove that $|V(G)\setminus N(x)|=k-1$. If $|V(G)\setminus N(x)|>k-1$, then there must exist a vertex $y\in (V(G)\setminus N(x))\setminus N(x_1)$ since $\deg(x_1)=d_1=k-1$. Since $G$ has no isolated vertex, $y$ must be adjacent to some vertex $x_j$ with $j\not= 1$. This implies that  $\nu(G-N(x_1))\geqs 1$. Hence we have $\deg(x_1)+\nu(G-N(x_1))\ge k$, but $\deg(x_1)=k-1\geqs s$, a contradiction to  $\deg(x_1)+\nu(G-N(x_1))\leqs k-1$.
\end{proof}

The following lemma states that the members of $\mathcal{F}_{n,s,t}$ are actually $H_{s,t}$-free.

\begin{lem}\label{LEMMA:Gnk is Ckq-free}
Each member of $\mathcal{F}_{n,s,t}$ is $H_{s,t}$-free for any $H_{s,t}\in\mathcal C_{s,t}$.
\end{lem}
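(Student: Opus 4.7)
The plan is to case on the structure of the embedded graph inside one part of $T_{n,2}$, exploiting the fact that $G$ is obtained from a bipartite graph by adding a few edges inside one part. Throughout let $A,B$ denote the two parts of $T_{n,2}$, with the graph $H$ embedded in $A$.

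In the main case $H=K_{k-1,k-1}$ with $k=s+t$, write $A_1,A_2$ for the two parts of the embedded $H$. The key observation is that for each $i\in\{1,2\}$, deleting $A_i$ kills every $H$-edge (since all $H$-edges go between $A_1$ and $A_2$), so $G-A_i$ is a subgraph of $T_{n,2}$ and hence bipartite. Consequently every odd cycle of $G$ contains at least one vertex of $A_1$ \emph{and} at least one vertex of $A_2$. Suppose, for contradiction, that $G$ contains a copy of $H_{s,t}$ with center $v$. This copy provides $s+t$ odd cycles through $v$ that pairwise share only $v$ (and therefore are pairwise vertex-disjoint off $v$). Since $A_1\cap A_2=\emptyset$, I can pick $i\in\{1,2\}$ with $v\notin A_i$; each cycle then contributes a private vertex to $A_i$, forcing $s+t\leqslant |A_i|=s+t-1$, a contradiction.

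In the exceptional case $(s,t)=(3,1)$ with $H=3K_3$, the embedded graph is no longer bipartite, so the pigeonhole above fails and I argue directly. First, $G-E(3K_3)=T_{n,2}$ is bipartite, so every odd cycle of $G$ uses at least one edge of $3K_3$. Assume a copy of $H_{3,1}$ with center $v$ sits in $G$. If $v\in A$, a short case check (the two non-$v$ vertices cannot both lie in $B$ as $B$ is independent) shows that every triangle through $v$ in $G$ uses at least one $3K_3$-edge incident to $v$; since $\deg_{3K_3}(v)\leqslant 2$, three edge-disjoint triangles at $v$ cannot coexist.

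If instead $v\in B$, then each of the three triangles through $v$ has both non-$v$ vertices in $A$ joined by a $3K_3$-edge. Since the three triangles are pairwise vertex-disjoint off $v$, these three $3K_3$-edges form a matching of size $3$ in $3K_3$; as this achieves $\nu(3K_3)=3$, the matching necessarily picks exactly one edge from each of the three triangle-components, using two of the three vertices of each component and leaving three unused vertices, one per component, pairwise non-adjacent in $3K_3$. The additional odd cycle $C$ of length $\geqslant 5$ through $v$ must use some $3K_3$-edge whose endpoints avoid the $6$ vertices used by the triangles, but any such available edge would need both endpoints among the three unused vertices, which sit in different components and thus span no edge of $3K_3$, a contradiction. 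The main obstacle is precisely this $3K_3$ subcase, where the loss of bipartiteness forces one to verify by hand that the three triangles rigidly consume one edge per component of $3K_3$, leaving no room for the fifth odd cycle to locate the $3K_3$-edge it requires.
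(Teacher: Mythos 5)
Your proof is correct, but it takes a genuinely different route from the paper's. The paper argues uniformly for both embedded graphs $K\in\{K_{k-1,k-1},3K_3\}$ by counting edges: each odd cycle of a putative copy of $H_{s,t}$ must use an odd number (hence at least one) of edges of $K$, so the $k$ cycles supply at least $k$ such edges; since the cycles meet only at the center, the center must lie in $V(K)$ (else these edges form a matching of size $k$ in $K$, contradicting $\nu(K)=k-1$), and then, with $r$ the number of those edges at the center $x$, the remaining cycles yield a matching of size at least $k-r$ in $K-N_K(x)$, contradicting $\deg_K(x)+\nu(K-N_K(x))\leqslant k-1$ --- i.e.\ exactly the structural property of $K_{k-1,k-1}$ and $3K_3$ isolated in Lemma~\ref{LEMMA:MAIN LEMMA}. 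You instead split the two embedded graphs. For $K_{k-1,k-1}$ your observation that deleting either side $A_i$ restores bipartiteness, so every odd cycle must own a private vertex in each $A_i$ (private because the cycles share only the center, which avoids at least one side), gives a clean vertex-pigeonhole $s+t\leqslant k-1$; this is arguably more elementary than the matching count, but it genuinely relies on the bipartite structure of the embedded graph and does not transfer to $3K_3$, which forces your separate case analysis (center in the host side versus the independent side, the three triangles consuming one edge per component, and the long cycle finding no remaining $3K_3$-edge). That analysis is correct as written; the trade-off is that the paper's matching argument is shorter, handles both embedded graphs in one stroke, and makes transparent that the real hypothesis being used is the degree-plus-matching condition of Lemma~\ref{LEMMA:MAIN LEMMA}, which is also what drives the extremal characterization elsewhere in the paper.
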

\begin{proof}
Suppose to the contrary that there is a graph $G\in \mathcal{F}_{n,s,t}$ containing a copy of $H_{s,t}$. Let $k=s+t$ and let $K$ be the copy of $K_{k-1,k-1}$ {(or $3K_3$ when $(s,t)=(3,1)$) embedded in one partite set of } $G$. Then each odd cycle of $H_{s,t}$ must contain odd number of the edges of $K$. Let $A=E(H_{s,t})\cap E(K)$. Then $|A|\geqs k=s+t$. We claim that the center of $H_{s,t}$ must lie in $K$. If not, then $G[A]$ contains a matching of order at least $k$ by the structure of $H_{s,t}$, a contradiction to $\nu(K)=k-1$. Let $x\in V(K)$ be the center of $H_{s,t}$.  Assume that $\deg_{G[A]}(x)=r$.
Let $A_x$ be the set of edges incident with $x$ in $G[A]$.
 Then at most $r$ cycles of $H_{s,t}$ intersect  $A_x$, that is $G[A]-A_x$ contains a matching of $K$ of order at least $k-r$. This is impossible since $\nu(K-N_{G[A]}(x))\leqs k-r-1$.
\end{proof}

In the remaining of the paper, for convenience, we set $\gamma=[400(c(H_{s,t})+1)k]^{-2}$ and $\beta=(c(H_{s,t})+1)\sqrt{\gamma}$, where $c(H_{s,t})$ is the circumference of $H_{s,t}$.

\begin{lem}\label{LEMMA: bipartition with bad vertices}
Let $G$ be a graph on $n$ vertices with $\delta(G)\geqs\lfloor\frac{n}{2}\rfloor$ and
 $e(G)\ge \ex(n, H_{s,t})>e(T_{n,2})$ if $G$ is $H_{s,t}$-free, or  $\phi(G,H_{s,t})\ge \ex(n, H_{s,t})>e(T_{n,2})$, otherwise. Let $V_0\dot\cup V_1$ be a partition of $V(G)$ such that $e(V_0, V_1)$ is maximized and let $m= e(V_0)+e(V_1)$ and $B=\{x\in V(G)\  |\ \deg_{G[V_i]}(x)>\beta n, \mbox{ for $x\in V_i$ and } i=0,1\}$. Then for sufficiently large $n$, the following holds:

 (a) $m<\gamma n^2$ and $|B|<\frac {2\gamma}{\beta}n$;

(b) $\frac{n}{2}-\sqrt\gamma n\leqs|V_i|\leqs\frac{n}{2}+\sqrt\gamma n$ for $i=0,1$;

(c) $e(u,V_{1-i})\geqs\frac{n}{4}-\frac14$ for  $u\in V_i \ (i=0,1)$;

(d) Moreover, $e(u,V_{1-i})\geqs\frac{n}{2}-\beta n-\frac12$ for  $u\in V_i\setminus B\  (i=0,1)$.	
\end{lem}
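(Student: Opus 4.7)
The four items are all standard consequences of a stability-plus-max-cut analysis, so the proof splits naturally into (i) getting $m < \gamma n^2$ from the stability lemmas, (ii) extracting $|B|$ from $m$, (iii) estimating the part sizes from the edge count, and (iv) exploiting the maximum-cut property together with $\delta(G)\geqs\lfloor n/2\rfloor$ for the degree bounds.

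\textbf{Step 1: proving (a).} Since $\ex(n,H_{s,t})\leqs e(T_{n,2})+(s+t-1)^2 = \frac{n^2}{4}+O(1)$, the hypothesis implies that $e(G)$ (respectively $\phi(G,H_{s,t})$) is at least $\ex(n,H_{s,t})-\delta n^2$ for every fixed $\delta>0$ once $n$ is large. Note $\chi(H_{s,t})=3$ and $H_{s,t}\neq K_3$, so Lemma~\ref{LEMMA:partition} (in the Tur\'an case) or Lemma~\ref{LEMMA: stability for phi} (in the decomposition case), applied with stability parameter $\gamma$, produces a partition $V(G)=U_0\dot\cup U_1$ with $e(U_0)+e(U_1)<\gamma n^2$. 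Since $V_0\dot\cup V_1$ maximises the cut, it has at most as many edges inside the parts, so $m<\gamma n^2$. Each $v\in B$ contributes more than $\beta n$ to $\sum_v \deg_{G[V_{\mathrm{part}(v)}]}(v)=2m$, hence $|B|\,\beta n< 2m<2\gamma n^2$ and $|B|<\frac{2\gamma}{\beta}n$, as required.

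\textbf{Step 2: proving (b).} Write $|V_0|=\frac{n}{2}+x$ and $|V_1|=\frac{n}{2}-x$ with $x\in\mathbb{R}$. Then $e(V_0,V_1)\leqs|V_0||V_1|=\frac{n^2}{4}-x^2$. On the other hand, using the hypothesis $e(G)\geqs\ex(n,H_{s,t})\geqs e(T_{n,2})\geqs\frac{n^2}{4}-\frac{n}{4}$, together with $e(V_0,V_1)=e(G)-m$ and $m<\gamma n^2$, we obtain
\[
\frac{n^2}{4}-x^2\;\geqs\;\frac{n^2}{4}-\frac{n}{4}-\gamma n^2,
\]
so $x^2\leqs\gamma n^2+\frac{n}{4}\leqs\gamma n^2+\frac{n}{4}$, which gives $|x|\leqs\sqrt{\gamma}\,n$ for $n$ large enough (the $\frac{n}{4}$ term is absorbed into $\sqrt{\gamma}\,n$). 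This is precisely $\frac{n}{2}-\sqrt{\gamma}\,n\leqs |V_i|\leqs\frac{n}{2}+\sqrt{\gamma}\,n$.

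\textbf{Step 3: proving (c) and (d).} Let $u\in V_i$. The maximum-cut property implies $e(u,V_{1-i})\geqs\deg_{G[V_i]}(u)$, because otherwise swapping $u$ into $V_{1-i}$ would strictly increase the cut. Combining with $\deg_G(u)=e(u,V_{1-i})+\deg_{G[V_i]}(u)\leqs 2e(u,V_{1-i})$ and $\deg_G(u)\geqs\lfloor n/2\rfloor\geqs\frac{n-1}{2}$ yields
\[
e(u,V_{1-i})\;\geqs\;\tfrac{1}{2}\deg_G(u)\;\geqs\;\tfrac{n}{4}-\tfrac{1}{4},
\]
which is (c). For (d), if in addition $u\notin B$ then $\deg_{G[V_i]}(u)\leqs\beta n$, so
\[
e(u,V_{1-i})\;=\;\deg_G(u)-\deg_{G[V_i]}(u)\;\geqs\;\tfrac{n}{2}-\tfrac{1}{2}-\beta n,
\]
which is (d).

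\textbf{Main obstacle.} The only non-routine step is Step 1: applying the right stability lemma in each of the two cases and checking that the chosen $\gamma$ (defined through $c(H_{s,t})$ and $k$) is consistent with the $\delta=\delta(\gamma)$ coming out of Lemma~\ref{LEMMA:partition}/Lemma~\ref{LEMMA: stability for phi}. Once (a) is established, parts (b)--(d) follow by elementary counting and the max-cut / minimum-degree hypotheses; no additional ideas are required.
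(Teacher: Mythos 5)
Your argument is essentially the paper's: stability (Lemma~\ref{LEMMA:partition} in the Tur\'an case, Lemma~\ref{LEMMA: stability for phi} in the decomposition case) combined with the fact that a maximum cut minimizes the number of internal edges gives (a), the edge count gives (b), and the max-cut property together with $\delta(G)\geqs\lfloor n/2\rfloor$ gives (c) and (d). The one flaw is in Step 2: from $x^2\leqs\gamma n^2+\frac{n}{4}$ you cannot conclude $|x|\leqs\sqrt{\gamma}\,n$, even for large $n$; the excess is roughly $\frac{1}{8\sqrt{\gamma}}$, a constant that is never ``absorbed''. The repair is immediate and is what the paper does: use the exact value $e(T_{n,2})=\lfloor n^2/4\rfloor\geqs\frac{n^2}{4}-\frac14$ together with the strict inequality $e(G)>e(T_{n,2})$ from the hypothesis (so $e(G)\geqs\lfloor n^2/4\rfloor+1$, which also covers the $\phi$-case since $e(G)\geqs\phi(G,H_{s,t})$), giving $x^2<\gamma n^2$ and hence exactly the bound $\frac n2-\sqrt{\gamma}n\leqs|V_i|\leqs\frac n2+\sqrt{\gamma}n$ that is needed downstream.
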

\begin{proof}
Applying  Lemma \ref{LEMMA:partition} and Lemma \ref{LEMMA: stability for phi} to $G$, respectively, with parameter $\gamma$, we have $m<\gamma n^2$ and so $|B|\le \frac{2m}{\beta n}<\frac {2\gamma}{\beta}n$. Let $a=\max\{||V_i|-\frac{n}{2}|,i=0,1\}$. Note that
	$$\lf\frac{n^2}{4}\rf=e(T_{n,2})< e(G)= m+e(V_0,V_1)<\gamma n^2+|V_0||V_1|=\gamma n^2+\frac{n^2}{4}-a^2.$$
Hence we have $a^2\leqs\gamma n^2$ and so $a\leqs\sqrt{\gamma}n$.
	By the choice of $V_0$ and $V_1$, for each  $u\in V_i \, (i=0,1)$ , we have $e(u,V_{1-i})\geqs \deg_{G[V_i]}(u)$. Note that $\deg_G(u)=\deg_{G[V_i]}(u)+e(u, V_{1-i})$ and $\delta(G)\geqs\lf\frac{n}{2}\rf$, we have  $$e(u,V_{1-i})\geqs\max\{\deg_{G[V_i]}(u),\lf\frac{n}{2}\rf-\deg_{G[V_i]}(u)\}.$$
Hence, $e(u,V_{1-i})\geqs\frac12(\deg_{G[V_i]}(u)+\lf\frac{n}{2}\rf-\deg_{G[V_i]}(u))\geqs\frac{n}{4}-\frac14$. Moreover,
	if $u\in V_i\setminus B$, then $e(u,V_{1-i})\geqs\lf\frac{n}{2}\rf-\deg_{G[V_i]}(u)\geqs\frac{n}{2}-\beta n-\frac12$.
\end{proof}

The following technical lemma  will be  used to find copies of $H_{s,t}$ from $G$ in Sections 3 and 4. Let $G$ be a graph and  $V(G)=V_0\dot\cup V_1$ and $x\in V(G)$. We will use $G_i$ and $N_i(x)$ instead of $G[V_i]$ and $N_G(x)\cap V_i$ for $i=0,1$ in the remaining of this paper.

\begin{lem}\label{LEMMA: Main technique}
	Let $s\geqs0, t\geqs1$ and $k=s+t$. Let $G$ be a graph on $n$ vertices satisfying  that

\noindent (i) $V_0\dot{\cup} V_1$ is a partition of $V(G)$ with $\max\{|V_0|,|V_1|\}\leqs(\frac12+\sqrt{\gamma})n$ and, for each $i=0,1$, $V_i$ has a subset $B_i$ with $E(G[B_i])=\emptyset$ and $|B_0\cup B_1|<\sqrt{\gamma} n$ and
  $$|N_{1-i}(u)|>\left\{
	\begin{aligned}
	&2n/5 & &\mbox{if $u\in V_i\setminus B_i$}\\
	&n/9 & &\mbox{if $u\in B_i$}\\
	\end{aligned} \right.,$$

\noindent(ii) $V_i$ has a subset $U_i$ with $|V_i\setminus U_i|<\sqrt{\gamma} n$ for $i=0,1$.

{If there exist a vertex $x\in B_i$ with $\deg_{G_i}(x)\geqs k$, or}
a vertex $x\in V_i\setminus B_i$  and a matching $M_{1-i}$ of $G[N_{1-i}(x)\setminus B_{1-i}]$ with
$\deg_{G_i}(x)+|M_{1-i}|\geqs s$ and
	\begin{equation}\label{e1}
  \deg_{G_i}(x)+|M_{1-i}|+\nu(G[V_i\setminus N_i(x)])+\nu(E_{V_{1-i}\setminus V(M_{1-i})}(x))\geqs k,
\end{equation}
	then for sufficiently large $n$, there is a copy of $H_{s,t}$, say $H$, in $G$ centered at $x$, satisfying that

(1) $H$ contains exactly $k$ edges in $E(V_0)\cup E(V_1)$,

(2) $V(H)\cap U_{1-i}\neq\emptyset$,

(3) if $\deg_{G_i}(x)\geqs k$, then $V(H)\cap U_j\neq\emptyset$ for $j=0,1$.
\end{lem}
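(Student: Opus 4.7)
The plan is to realise each of the $k=s+t$ cycles of $H_{s,t}$ using exactly one in-partition edge, and to extend that edge to an odd cycle through $x$ by a greedy alternating walk across the bipartition. First I choose $k$ in-partition edges to serve as ``bases''. If $x\in B_i$ with $\deg_{G_i}(x)\geqs k$, I pick any $k$ neighbours $y_1,\dots,y_k\in N_i(x)$; since $B_i$ is independent, all $y_j$ lie in $V_i\setminus B_i$ and so have at least $2n/5$ cross-neighbours. If instead $x\in V_i\setminus B_i$, I extract the four edge sets indicated in~(\ref{e1}), namely the edges $xy$ with $y\in N_i(x)$, the edges of $M_{1-i}$, a maximum matching in $G[V_i\setminus N_i(x)]$, and a maximum matching in $E_{V_{1-i}\setminus V(M_{1-i})}(x)$; these four families jointly supply at least $k$ in-partition edges which are pairwise vertex-disjoint apart from sharing $x$ inside the first family. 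Using $\deg_{G_i}(x)+|M_{1-i}|\geqs s$ I reserve $s$ edges from the first two families as the bases of the $s$ triangles and use the remaining $t$ edges as the bases of the $t$ long odd cycles.

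Next I extend each base $e$ to an odd cycle through $x$ of its prescribed length $2\ell+1$: the cycle closes via an alternating $V_0$--$V_1$ path of even length whose endpoints are those of $e$ (together with the cross-edges incident to $x$ when $e$ is not incident to $x$). Proceeding one cycle at a time and one vertex at a time, each selection calls for a new vertex of the appropriate partite class lying in the common neighbourhood of one or two already placed vertices, lying in the corresponding $U_j$, and not yet used. At any moment at most $K:=k\cdot c(H_{s,t})$ vertices have been used and at most $\sgmn$ vertices of each $V_j$ miss $U_j$. Using
\[
|N(p)\cap N(q)\cap V_{1-i}|\geqs |N(p)\cap V_{1-i}|+|N(q)\cap V_{1-i}|-|V_{1-i}|
\]
together with $|V_{1-i}|\leqs n/2+\sgmn$, and noting that each chosen in-partition edge has at most one endpoint in $B_0\cup B_1$ (since $B_0,B_1$ are independent in $G$ and $M_{1-i}\subseteq G[N_{1-i}(x)\setminus B_{1-i}]$), the worst case of one endpoint in $B$ and one outside yields at least $2n/5+n/9-n/2-\sgmn=n/90-\sgmn$ admissible common cross-neighbours. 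Subtracting the $K+\sgmn$ forbidden vertices still leaves $\Omega(n)$ admissible choices for $n$ large, since $\sqrt{\gm}=1/[400(c(H_{s,t})+1)k]$.

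Conclusion~(1) holds by construction. Conclusion~(2) follows because every cycle contains a vertex in $V_{1-i}$ and I always pick such cross-vertices inside $U_{1-i}$. For conclusion~(3), whenever $\deg_{G_i}(x)\geqs k$ I (re)run the construction using only the $k$ type-1 bases $xy_1,\dots,xy_k$; the hypothesis $t\geqs 1$ forces at least one cycle to have length $\geqs 5$, and one of its ``middle'' vertices $w\in V_i\setminus\{x\}$ is selected by the greedy procedure and can be taken in $U_i$, giving $V(H)\cap U_i\neq\emptyset$, while the cross-vertices give $V(H)\cap U_{1-i}\neq\emptyset$. The main obstacle I anticipate is the case bookkeeping around $B$-vertices: for each of the four edge types---and especially for types~3 and~4, where an endpoint of the base may lie in $B$---one must verify that the ``$n/90-\sgmn$'' lower bound on common neighbourhoods survives at every step of the alternating extension after removing used vertices and $V_j\setminus U_j$. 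Once this per-step bound is in hand, the simultaneous construction of all $k$ vertex-disjoint (apart from $x$) cycles is routine.
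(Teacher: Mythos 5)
Your proposal is correct and follows essentially the same route as the paper's proof: you draw the $k$ base edges from the same four families encoded in (\ref{e1}), reserve the triangle bases among the edges at $x$ and the edges of $M_{1-i}$ using $\deg_{G_i}(x)+|M_{1-i}|\geqs s$, and extend each base to an odd cycle through $x$ greedily via alternating paths in $U_j\setminus B_j$, with the same $2n/5+n/9-n/2$ common-neighbourhood count and the same special treatment of the case $\deg_{G_i}(x)\geqs k$. The only slight looseness is in your justification of (2): triangles built on $M_{1-i}$-edges contribute no newly picked vertices (their $V_{1-i}$-vertices need not lie in $U_{1-i}$), so, as you already do for (3), one should appeal to $t\geqs1$ and the internal vertices of a cycle of length at least $5$, which are chosen in $U_{1-i}\setminus B_{1-i}$.
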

\begin{proof}
	 Without loss of generality, assume that there is such a vertex $x\in V_0$. Let $N_0(x)=\{x_1,\cdots,x_\ell\}$ and $M_1=\{w_1z_1,\cdots,w_mz_m\}$ be a matching of $G[N_1(x)\setminus B_1]$ with $\ell+m\geqs s$ and $$\ell+m+\nu(G[V_0\setminus N_0(x)])+\nu(E_{V_1\setminus V(M_1)}(x))\geqs k.$$
Let $\{u_1v_1,\cdots,u_pv_p\}$ and $\{w'_1z'_1,\cdots,w'_qz'_q\}$ be two matchings of $G[V_0\setminus N_0(x)]$ and $E_{V_1\setminus V(M_1)}(x)$ respectively, such that $$\ell+m+p+q=k,$$ and assume that $\{w'_1, \cdots, w'_q\}\subseteq N_1(x)$. { In the case that $x\in B_0$ and $\deg_{G_0}(x)\geqs k$, we simply set $\ell=k$ and $m=p=q=0$.}
Suppose $H_{s,t}$ consists of $k=s+t$ odd cycles of lengths $q_1,q_2,\cdots,q_k$ respectively, where $3=q_1=\cdots=q_s< q_{s+1}\leqs\cdots\leqs q_k$.
	
 Note that $xw_iz_i$ is a triangle for every edge $w_iz_i\in M_1$, $i=1,\cdots,m$. Since $M_1$ is a matching, by using $w_1z_1,\cdots,w_{\min\{m,s\}}z_{\min\{m,s\}}$, we can easily find a copy of $H_{\min\{m,s\},0}$. Next we construct cycles of length $q_{\min\{m,s\}+1},\cdots,q_k$ step by step such that these $k$ odd cycles form an $H_{s,t}$. In another words, we will show that at step $j$ with $\min\{m,s\}<j\leqs k$, we construct a cycle of length $q_j$ which intersects previous constructed cycles only at $x$.
		
\medskip	
\noindent{\bf Case 1.} $j\leqs m$.
\medskip

\noindent	Since $m\geqs j>\min\{m,s\}$, we have $m>s$. It yields that $j>\min\{m,s\}=s$ and $q_j\geqs 5$. Avoiding all vertices except $x$ that have been previously used, we find vertices $w_{0}^1,w_{1}^2,\cdots,w_{0}^{q_j-4}, w_{1}^{q_j-3}$ from $U_0\setminus B_0$ and $U_1\setminus B_1$ alternatively with $w_0^1\in U_0\setminus B_0$ such that $P=z_{j}w_{0}^1 w_{1}^2\cdots w_{1}^{q_j-3}x$ is a path of length $q_j-2$. This is possible since each vertex $u\in V_i$ has at least
$$e_G(u, V_{1-i})-|B_{1-i}|-|V_{1-i}\setminus U_{1-i}|\geqs(\frac19-2\sqrt\gamma)n$$ neighbors in $U_{1-i}\setminus B_{1-i}$,
	and  $w_{0}^{q_j-4}\in U_0\setminus B_0$  and $x$ have at least
	$$ e_{G}(w_{0}^{q_j-4},V_{1})+e_{G}(x,V_{1})-|V_{1}|-|V_1\setminus U_1|-|B_1| >\frac25n+\frac19n-\frac12n-3\sqrt{\gamma}n>\frac{n}{400}$$
common neighbors in $U_{1}\setminus B_1$.
Hence $P\cup \{xw_{j}z_{j}\}$ is a desired $q_j$-cycle.

\medskip	
\noindent{\bf Case 2.} $m<j\leqs \ell+m$.

\medskip
\noindent If $m<j\leqs s$, then $q_j=3$. Note that $E(G[B_i])=\emptyset$ for $i=0,1$. Then at least one of $\{x, x_{j-m}\}$ is not in $B_0$. Hence the number of common neighbors of $x$ and $x_{j-m}$ in $U_1\setminus B_1$ is at least
	$$e_G(x,V_1)+e_G(x_{j-m},V_1)-|V_1|-|V_1\setminus U_1|-|B_1|>\frac25n+\frac19n-\frac12n-3\sqrt{\gamma}n>\frac{n}{400}.$$
So we can find a triangle using $x, x_{j-m}$ and a common neighbor of them which avoids all vertices that have been previously used.

	If $s<j\leqs \ell+m$, then $q_j\geqs5$. For the same reason as in Case 1, we can find an alternating path $P=xw_{1}^1 w_{0}^2 \cdots w_1^{q_j-4}w_{0}^{q_j-3}$ with vertices chosen from $U_0\setminus B_0$ and $U_1\setminus B_1$ alternatively with $w_1^1\in U_1\setminus B_1$  and a common neighbor of $x_{j-m}$ and $w_0^{q_j-3}$ in $V_1\setminus B_1$, say $w_1^{q_j-2}$, avoiding all vertices except $x$ that have been previously used. Hence $P\cup\{w_{1}^{q_j-2}x_{j-m}x\}$ is a desired $q_j$-cycle.

\medskip	
\noindent{\bf Case 3.} $\ell+m<j\leqs \ell+m+p$.
	
\medskip
\noindent {Since $j>\ell+m\geqs s$ and $p>0$, we have $q_j\geqs5$ and $x\notin B_0$.} Since $E(G[B_0])=\emptyset$, at least one of $\{u_{j-\ell-m}, v_{j-\ell-m}\}$ is not in $B_0$. Assume that $u_{j-\ell-m}\notin B_0$. Hence, with the same reason as in Case 1 and Case 2, avoiding all vertices except $x$ that have been previously used, we first find a common neighbor of $x$ and $u_{j-\ell-m}$, say $w_1^1$, in $U_1\setminus B_1$, next find an alternating path  $P=xw_{1}^3 w_{0}^4 \cdots w_{0}^{q_j-3}$
of length $q_j-4$ with vertices chosen from $U_0\setminus B_0$ and $U_1\setminus B_1$, alternatively (where if $q_j=5$, we set $P=x=w_{0}^{q_j-3}$), and finally, find a common neighbor of $w_{0}^{q_j-3}(\notin B_0)$ and $v_{j-\ell-m}$, say $w_{1}^{q_j-2}$ in $U_1\setminus B_1$. Therefore,  $\{u_{j-\ell-m}w_1^1x\}\cup P\cup\{w_{1}^{q_j-2}v_{j-\ell-m}u_{j-\ell-m}\}$ is a desired $q_j$-cycle.

\medskip	
\noindent{\bf Case 4.} $\ell+m+p<j\leqs k$.

\medskip
\noindent
	For the same reason as the above, avoiding all vertices except $x$ that have been previously used, we first find an alternating path $P=z'_{j-\ell-m-p}w_{0}^1w_{1}^2\cdots w_{0}^{q_j-4}$ of length $q_j-3$ with vertices chosen from $U_0\setminus B_0$ and $U_1\setminus B_1$ alternatively, next a common neighbor of $w_{0}^{q_j-4}(\notin B_0)$ and $x$, say $w_{1}^{q_j-3}$,  in $U_1\setminus B_1$. Then $P\cup\{w_0^{q_j-4}w_{1}^{q_j-3}xw'_{j-\ell-m-p}z'_{j-\ell-m-p}\}$ is a desired $q_j$-cycle.
	
	Thus we always can find a copy of $H_{s,t}$ centered at $x$. In each step, the new constructed $q_j$-cycle uses exactly one edge in $E(V_0)\cup E(V_1)$ and at least one new vertex in $U_1$, so (1) and (2) hold. Moreover, if $\deg_{G_i}(x)\geqs k$, then we choose $k$ neighbors of $x$ in $G_i$ and set $\ell=k$ and $p=m=q=0$. Hence we find a copy of $H_{s,t}$ only using Case 2. In Case 2, if $q_j\geqs5$, then the $q_j$-cycle we found uses at least one vertex in $U_1$ and a vertex in $U_0$. So (3) holds.
\end{proof}

\section{Proof of Theorem \ref{THM:MAIN THEOREM 1}}
We begin with a technical lemma, which is crucial to the proof of Theorem \ref{THM:MAIN THEOREM 1} and also will be used in next section.
\begin{lem}\label{LEMMA: Force to extremal graph}
Given integers $s\geqs0, t\geqs1 $ and $H_{s,t}\in\mathcal C_{s,t}$.  Let $k=s+t$ and let $G$ be a graph on $n$ vertices with $V(G)=V_0\dot\cup V_1$ and $e(G)\geqs e(T_{n,2})+(k-1)^2$. If $G$ satisfies \\
(i) $||V_i|-\frac n2|\leqs {\sqrt{\gamma}n}$ and $e(u,V_{1-i})>\frac n2-c$ for each $u\in V_{i}$ and $i=0,1$, where $c$ is a constant, and \\
(ii) for any vertex $x\in V_i,~(i=0,1)$ and any maximum matching $M_{1-i}$ of $G[N_{1-i}(x)]$  with $\deg_{G_i}(x)+|M_{1-i}|\geqs s$, we have
\begin{equation}\label{e2}
 \deg_{G_i}(x)+|M_{1-i}|+\nu(G[V_i\setminus N_i(x)])+\nu(E_{V_{1-i}\setminus V(M_{1-i})}(x))\leqs k-1,
\end{equation}
then for all sufficiently large $n$, $e(G)=e(T_{n,2})+(k-1)^2$. Moreover, if $G$ is $H_{s,t}$-free, then $G\in \mathcal F_{n,s,t}$.
\end{lem}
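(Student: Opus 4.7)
The plan is to reduce the proof to two applications of Lemma~\ref{LEMMA:MAIN LEMMA}: one bounding the total number of in-edges $m := e(G[V_0])+e(G[V_1])$ from above by $(k-1)^2$, and a second that pins down the equality case. Since $e(V_0,V_1) \leqs |V_0||V_1| \leqs e(T_{n,2})$, the hypothesis $e(G) \geqs e(T_{n,2})+(k-1)^2$ gives $m \geqs (k-1)^2$ for free, so the task is to show $m \leqs (k-1)^2$. I do this by applying Lemma~\ref{LEMMA:MAIN LEMMA} to $\tilde G := G[V_0] \cup G[V_1]$ (a graph on $V_0 \dot{\cup} V_1$ whose edges are exactly the in-edges of $G$).

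To apply Lemma~\ref{LEMMA:MAIN LEMMA} to $\tilde G$, I must verify (A) $\nu(\tilde G) = \nu(G[V_0])+\nu(G[V_1]) \leqs k-1$, and (B) $\deg_{\tilde G}(y)+\nu(\tilde G - N_{\tilde G}(y)) \leqs k-1$ for every $y$ with $\deg_{\tilde G}(y) \geqs s$. Both use the same averaging device: condition (i) yields that each vertex of $V_i$ has fewer than $\sgmn+c$ non-neighbors in $V_{1-i}$, so for any constant-sized $S \subseteq V_{1-i}$ one can find $y \in V_i$ adjacent to every vertex of $S$. For (A), suppose $|M_0|+|M_1| \geqs k$ for maximum matchings $M_i$ of $G[V_i]$; picking $y \in V_{1-i}$ adjacent to $V(M_{1-i})$ forces $\nu(G[N_{1-i}(y)]) \geqs |M_{1-i}|$, and coupling the elementary inequality $\nu(G[V_{1-i}\setminus N_{1-i}(y)]) \geqs |M_{1-i}|-\deg_{G_{1-i}}(y)$ with (ii) applied to $y$ contradicts $|M_0|+|M_1|\geqs k$. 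For (B), choosing $y$ adjacent to a maximum matching of $G[V_{1-i}]$ gives $\nu(G[N_{1-i}(y)]) \geqs \nu(G[V_{1-i}])$, so (ii) directly yields the required inequality.

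The chief technical obstacle I foresee is verifying the premise ``$\deg_{G_i}(y)+|M_{1-i}(y)| \geqs s$'' of condition (ii). When both $|M_0|$ and $|M_1|$ are strictly less than $s$ (consistent with $|M_0|+|M_1|\geqs k$ only when $s>t$), the averaging does not automatically supply a $y$ of high enough degree. I plan to handle this by restricting the averaging to $y \in V(M_{1-i})$, which guarantees $\deg_{G_{1-i}}(y)\geqs 1$, and then using $|M_0|+|M_1|\geqs k = s+t$ to push $\deg_{G_{1-i}}(y) + |M_{1-i}(y)|$ past the threshold $s$. Once $m\leqs (k-1)^2$ is in place, $e(G) = e(T_{n,2})+(k-1)^2$ follows, and equality forces a balanced partition, a complete bipartite cross, and (via the equality case of Lemma~\ref{LEMMA:MAIN LEMMA}) that $\tilde G \cong K_{k-1,k-1}$ or, only when $(s,t)=(3,1)$, $\tilde G \cong 3K_3$. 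Finally, in the $H_{s,t}$-free case, any splitting of this substructure between $V_0$ and $V_1$ would, via Lemma~\ref{LEMMA: Main technique} together with the complete cross, produce a copy of $H_{s,t}$; therefore all in-edges must lie in a single partite set and $G \in \mathcal{F}_{n,s,t}$.
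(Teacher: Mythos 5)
Your reduction to Lemma~\ref{LEMMA:MAIN LEMMA} applied to $\tilde G=G[V_0]\cup G[V_1]$ has a genuine gap at exactly the point you flagged, and your proposed fix does not close it. To prove $\nu_0+\nu_1\leqs k-1$ you must invoke condition (ii) at a well-chosen vertex $y$, but (ii) only says anything when its premise $\deg_{G_i}(y)+|M_{1-i}|\geqs s$ holds. Restricting the averaging to $y\in V(F_i)$ only guarantees $\deg_{G_i}(y)\geqs 1$, hence $\deg_{G_i}(y)+|M_{1-i}|\geqs 1+\nu_{1-i}$, which reaches $s$ only if $\nu_{1-i}\geqs s-1$. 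Nothing prevents the bad configuration $2\leqs\nu_0,\nu_1\leqs s-2$ with $\nu_0+\nu_1\geqs k$ (possible whenever $s\geqs t+4$; e.g.\ $(s,t)=(5,1)$, $\nu_0=\nu_1=3$, and in-graphs of Chv\'atal--Hanson-extremal type can still have $m\geqs(k-1)^2$), and there the premise of (ii) never triggers for any admissible $y$, so no contradiction is obtained. This is precisely the regime the paper cannot dispose of by averaging either: its Claim 2.1 only pins down $2\leqs\nu_i\leqs s-1$, and the contradiction then comes from a separate counting argument (Claim 2.2), which uses (ii) to show $\deg_G(x)-|V_{1-i}|\leqs k-1-\nu_{1-i}$ for every $x\in V_i$ and then applies Lemma~\ref{LEMMA: Sum min deg b leqs} with $b=\Delta_i-\nu_{1-i}$ to force $e(G)<e(T_{n,2})+(k-1)^2$. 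That counting idea is the missing ingredient in your plan.

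There is a second, structural problem with applying Lemma~\ref{LEMMA:MAIN LEMMA} directly to $\tilde G$: its hypothesis must hold for \emph{every} vertex $x$ with $\deg_{\tilde G}(x)\geqs s$, and for $x\in V_i$ it reads $\deg_{G_i}(x)+\nu(G[V_i\setminus N_i(x)])+\nu_{1-i}\leqs k-1$. You cannot ``choose'' $x$ here, and for an arbitrary $x$ condition~(\ref{e2}) does not imply this, because $|M_{1-i}|+\nu(E_{V_{1-i}\setminus V(M_{1-i})}(x))$ can be strictly smaller than $\nu_{1-i}$: an edge of $G_{1-i}$ both of whose endpoints are among the up to $\sqrt\gamma n+c$ non-neighbors of $x$ is counted by neither term. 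The paper sidesteps this by first proving $e(V_0)\cdot e(V_1)=0$ (its Claims 3 and 4, again via Chv\'atal--Hanson-type counting rather than via Lemma~\ref{LEMMA: Main technique}), and only then applying Lemma~\ref{LEMMA:MAIN LEMMA} inside a single part, where $\nu_{1-i}=0$ and (\ref{e2}) does give the needed hypothesis. Note also that the one-sidedness of the in-edges, and hence $e(G)=e(T_{n,2})+(k-1)^2$, is derived in the paper from (i), (ii) and counting alone, without assuming $H_{s,t}$-freeness, which matters because the lemma is later applied to graphs that need not be $H_{s,t}$-free; your last step, which uses Lemma~\ref{LEMMA: Main technique} and freeness to rule out a split structure, would only serve the ``moreover'' clause.
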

\noindent{\it Proof.} 
Let $m=e(V_0)+e(V_1)$. Since $m+e(V_0,V_1)= e(G)\geqs e(T_{n,2})+(k-1)^2$, we have $m\geqs(k-1)^2$, with equality holds only if $G$ contains a balanced complete bipartite subgraph with partitions $V_0$ and $V_1$. Condition (ii) implies that
\begin{claim}\label{Claim: Delta<k}
$\max\{\Delta_0,\Delta_1\}\leqs k-1$.
\end{claim}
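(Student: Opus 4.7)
The plan is to derive Claim \ref{Claim: Delta<k} immediately, by contradiction, from hypothesis (ii); essentially no additional machinery is required.

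Suppose to the contrary that some vertex $x$, say $x\in V_0$ (the case $x\in V_1$ is symmetric), satisfies $\deg_{G_0}(x)\geqs k$. I would pick any maximum matching $M_1$ of $G[N_1(x)]$. Since $k=s+t$ and $t\geqs 1$, we have $k\geqs s$, and hence
$$\deg_{G_0}(x)+|M_1|\geqs \deg_{G_0}(x)\geqs k\geqs s,$$
which is precisely the trigger hypothesis required to invoke inequality (\ref{e2}) in condition (ii). Applying (ii) therefore yields
$$\deg_{G_0}(x)+|M_1|+\nu(G[V_0\setminus N_0(x)])+\nu(E_{V_1\setminus V(M_1)}(x))\leqs k-1.$$
Dropping the three nonnegative terms $|M_1|$, $\nu(G[V_0\setminus N_0(x)])$ and $\nu(E_{V_1\setminus V(M_1)}(x))$ on the left-hand side leaves $\deg_{G_0}(x)\leqs k-1$, contradicting $\deg_{G_0}(x)\geqs k$. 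This forces $\Delta_0\leqs k-1$, and by symmetry $\Delta_1\leqs k-1$.

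I do not expect any real obstacle here: the claim is essentially a restatement of hypothesis (ii) specialized to a vertex that realizes the maximum in-degree, and the only minor observation needed is $k\geqs s$, which is where the assumption $t\geqs 1$ enters. The claim is best viewed as a structural first step in the proof of Lemma \ref{LEMMA: Force to extremal graph}: once $\Delta_i\leqs k-1$ is in hand, it will enable Lemma \ref{LEMMA:MAIN LEMMA} to be applied to $G_0$ and $G_1$, which, combined with the lower bound $m\geqs(k-1)^2$ noted just before the claim, will eventually pin down one of the induced subgraphs $G[V_i]$ as the rigid $K_{k-1,k-1}$ (or $3K_3$ in the exceptional case $(s,t)=(3,1)$) configuration required to conclude both $e(G)=e(T_{n,2})+(k-1)^2$ and $G\in\mathcal F_{n,s,t}$.
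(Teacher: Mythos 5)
Your proof is correct and is exactly the argument the paper intends: the paper simply asserts that condition (ii) implies the claim, and your write-up (take any maximum matching $M_{1-i}$ of $G[N_{1-i}(x)]$, note $\deg_{G_i}(x)\geqs k\geqs s$ triggers inequality (\ref{e2}), then drop the nonnegative terms) is the routine verification of that assertion. No gap and no difference in approach.
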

Condition (ii) also implies that $\nu_i\le k-1$ for $i=0,1$. Furthermore, we have
\begin{claim}\label{Claim:nu0+nu1<k}
$\nu_0+\nu_1\leqs k-1.$
\end{claim}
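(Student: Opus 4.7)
I plan to prove this claim by contradiction: suppose $\nu_0+\nu_1\geqs k$, and aim to locate $x\in V_i$ together with a maximum matching $M_{1-i}$ of $G[N_{1-i}(x)]$ satisfying $\deg_{G_i}(x)+|M_{1-i}|\geqs s$ for which the LHS of (ii) is at least $k$, contradicting the upper bound $k-1$.

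A matching-removal argument furnishes the key inequality: for every $x\in V_i$,
\[
\deg_{G_i}(x)+\nu(G[V_i\setminus N_i(x)])\geqs \nu_i,
\]
since removing $N_i(x)$ destroys at most $\deg_{G_i}(x)$ edges of any maximum matching of $G_i$. Let $V_j^+$ denote the set of non-isolated vertices of $G_j$ (bounded, since $|V_j^+|\leqs 2e(G_j)$). Call $u\in V_{1-j}$ \emph{good} if $N_j(u)\supseteq V_j^+$; then $|M_j(u)|=\nu_j$, the matching $M_j(u)$ is a maximum matching of all of $G_j$, $V_j\setminus V(M_j(u))$ is independent in $G_j$, and hence the ``matching contribution'' $|M_j(u)|+\nu(E_{V_j\setminus V(M_j(u))}(u))$ equals $\nu_j$ exactly. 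By condition (i), each vertex of $V_{1-j}$ misses at most $\sqrt\gamma n+c$ vertices of $V_j$, so at most $|V_j^+|(\sqrt\gamma n+c)=O(\sqrt\gamma n)$ vertices of $V_{1-j}$ are non-good.

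In the case $\max(\nu_0,\nu_1)\geqs s$, say $\nu_1\geqs s$: any good $x\in V_0$ satisfies $|M_1(x)|=\nu_1\geqs s$, so (ii) applies and its LHS is at least $\nu_0+\nu_1\geqs k$, a contradiction. In the remaining case $\nu_0,\nu_1\leqs s-1$, one forces $\nu_0,\nu_1\geqs t+1$ and $s\geqs t+2$. Here the hypothesis $e(G)\geqs e(T_{n,2})+(k-1)^2$ together with $|V_0||V_1|\leqs\lfloor n^2/4\rfloor$ bounds the number of non-edges between $V_0$ and $V_1$ by $m-(k-1)^2$, where $m=e(G_0)+e(G_1)$. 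A careful bound on $m$ using the true extremal graph for the constraints ``$\nu\leqs s-1$, $\Delta\leqs k-1$'' (for instance $K_{2(s-1)}$ when $s=t+2$, giving $e(G_i)\leqs (s-1)(2s-3)$, strictly sharper than the bound from Lemma \ref{LEMMA:f_Nu_Delta}), combined with $|V_0^+|+|V_1^+|\geqs 2(\nu_0+\nu_1)\geqs 2k$, shows that at least one vertex of $V_0^+\cup V_1^+$ is good; such a vertex automatically has $G_i$-degree $\geqs 1=s-\nu_{1-i}$ in the boundary regime, and applying (ii) there produces the contradiction.

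The technical crux is this last subcase. The generic bound $e(G_i)\leqs \nu_i(\Delta_i+1)$ from Lemma \ref{LEMMA:f_Nu_Delta} is too loose for the counting to close at the extremal boundary $s=t+2$; one must invoke the precise extremal structure of $f(\nu,\Delta)$-extremal graphs to make the count of good vertices strictly exceed the count of non-edges in $V_0^+\cup V_1^+$. If the basic counting ever fails, a backup is to relax the notion of ``good'' to ``misses at most one vertex $w$ of $V_{1-i}^+$, and $M_{1-i}(x)$ can be extended through $w$ by an edge in $E_{V_{1-i}\setminus V(M_{1-i}(x))}(x)$,'' so that the matching contribution still equals $\nu_{1-i}$. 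Verifying that such a usable vertex exists in every sub-configuration is the delicate core of the argument.
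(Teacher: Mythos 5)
Your first case ($\max\{\nu_0,\nu_1\}\geqs s$) is sound and is essentially the paper's own argument: your ``good'' vertices play the role of the paper's sets $A_i=\cap_{v\in V(F_{1-i})}N_i(v)$, and at such a vertex inequality~(\ref{e2}) forces $\deg_{G_i}(x)+|M_{1-i}|\leqs s-1$, hence $\nu_{1-i}\leqs s-1$ and $\nu_i\geqs t+1$. The genuine gap is in the remaining case $t+1\leqs\nu_0,\nu_1\leqs s-1$, where you stay local: find one good vertex of $V_0^+\cup V_1^+$ and violate condition (ii) there. But (ii) can only be invoked when $\deg_{G_i}(x)+|M_{1-i}|\geqs s$, i.e.\ when $\deg_{G_i}(x)\geqs s-\nu_{1-i}$, and your ``automatically $\geqs 1=s-\nu_{1-i}$'' covers only the extreme subcase $\nu_{1-i}=s-1$. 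Whenever $s\geqs t+3$ one can have $t+1\leqs\nu_{1-i}\leqs s-2$ (e.g.\ $s=6$, $t=1$, $\nu_0=\nu_1=4$), and then a good vertex of in-degree $1$ does not meet the hypothesis of (ii) at all; if, say, $G_0$ and $G_1$ are matchings, no vertex triggers (ii) by this route, so the configuration can only be excluded through the global edge count, which your sketch never brings to bear quantitatively. Moreover, the sharpened bound you lean on, $e(G_i)\leqs(s-1)(2s-3)$ under ``$\nu\leqs s-1$, $\Delta\leqs k-1$'', is false as stated: Lemma~\ref{LEMMA:f_Nu_Delta} is an exact extremal result, and for $s=t+2$ one has $f(s-1,2s-3)=(s-1)(2s-3)+(s-2)>(s-1)(2s-3)$, attained by graphs other than $K_{2(s-1)}$; beating it would require structural input (e.g.\ from (ii)) that you do not supply. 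Your ``backup'' relaxation of goodness is precisely the unverified delicate point, so the main case is not closed.

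For contrast, the paper closes this case globally rather than locally (its Claim 2.2): condition (ii) gives $\deg_{G_i}(x)+|M_{1-i}|\leqs k-1$ for every $x\in V_i$, and since a maximum matching of $G_{1-i}$ has at most $|M_{1-i}|$ edges inside $G[N_{1-i}(x)]$ and at most $|V_{1-i}|-e(x,V_{1-i})$ further edges, one gets $\deg_G(x)-|V_{1-i}|\leqs k-1-\nu_{1-i}$ for every $x\in V_i$. Applying Lemma~\ref{LEMMA: Sum min deg b leqs} to $G_i$ with $b=\Delta_i-\nu_{1-i}$ (legitimate because $\nu_{1-i}\geqs 2$ by the first case's conclusion) yields $\sum_{x\in V_i}\deg_{G_i}(x)\leqs\nu_i(2\Delta_i-\nu_{1-i})\leqs\nu_i(2k-2-\nu_{1-i})$, and summing over $i=0,1$ gives $e(G)<|V_0||V_1|+(k-1)^2\leqs e(T_{n,2})+(k-1)^2$, contradicting the edge hypothesis. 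Some degree-sum argument of this kind is what your proposal is missing in the regime $\nu_0,\nu_1\leqs s-1$.
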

\noindent
	We prove it by contradiction. Suppose that $\nu_0+\nu_1\geqs k$. Let $F_0$ and $F_1$ be two maximum matchings of $G_0$ and $G_1$, respectively. Then  $|F_0|+|F_1|\ge k$. Let $A_i=\cap_{v\in V(F_{1-i})}N_i(v)$. We first show that

\medskip
\noindent{\bf Claim 2.1}\label{FACT:A_i not empty and 2leqsNu}  $A_i\neq\emptyset$ and $2\leqs\nu_i\leqs s-1\leqs k-2$ for every $i=0,1$.\\
 For each $i=0,1$, since $||V_i|-\frac n2|\leqs {\sqrt{\gamma}n}$ and $e(u,V_i)>\frac n2-c$ for all $u\in V_{1-i}$.
 By definition of $A_i$,
		\begin{eqnarray*}
			|A_i|&\geqs& {2|F_{1-i}|(\frac n2-c)-(2|F_{1-i}|-1)|V_i|}\\
			&\geqs& {2|F_{1-i}|(\frac n2-c)-(2|F_{1-i}|-1)(\frac n2+{\sqrt{\gamma}n})}\\
			& >&{\frac n2-2|F_{1-i}|(c+{\sqrt{\gamma}n})}\\
				&>&{(\frac12-2k\sqrt{\gamma})n-2kc},
		\end{eqnarray*}
the last inequality holds since $|F_{1-i}|= \nu_{1-i}\le k-1$.
So $A_i\neq\emptyset$ for sufficiently large $n$, and furthermore, for any vertex $x\in A_i$, $|M_{1-i}|=\nu(G[N_{1-i}(x)])=\nu_{1-i}$. It is easy to show  that $\deg_{G_i}(x)+\nu(G[V_i\setminus N_i(x)])\geqs|F_i|=\nu_i$.
Hence
$$\deg_{G_i}(x)+|M_{1-i}|+\nu(G[V_i\setminus N_i(x)])+\nu(E_{V_{1-i}\setminus V(M_{1-i})}(x))\geqs\nu_i+\nu_{1-i}\geqs k.$$
Thus we must have $\deg_{G_i}(x)+|M_{1-i}|\leqs s-1$, otherwise we have a contradiction to condition (ii). So $|M_{1-i}|=\nu_{1-i}\leqs s-1=k-t-1\leqs k-2$ for $i=0, 1$ (since $t\geqs1$). Therefore, $s-1+\nu_i\geqs\nu_{1-i}+\nu_i\geqs k$ and thus $\nu_i\geqs k-s+1=t+1\geqs 2$ .	

\medskip
\noindent{\bf Claim 2.2}\label{FACT:e(G)< leads to contra}
	We have that $e(G)<e(T_{n,2})+(k-1)^2.$  So we get a contradiction to $e(G)\ge e(T_{n,2})+(k-1)^2$.

\medskip
\noindent	For any $x\in V_i$ ($i=0,1$),
	Condition (ii) implies that $\deg_{G_i}(x)+|M_{1-i}|\leqs k-1$. Hence,
	\begin{eqnarray*}
		\deg_G(x)-|V_{1-i}|&=&\deg_{G_i}(x)+e_G(x,V_{1-i})-|V_{1-i}|\\
		&\leqs&k-1-(|M_{1-i}|+|V_{1-i}|-e_G(x,V_{1-i}))\\
		&\leqs&k-1-\nu_{1-i},
	\end{eqnarray*}	
where the last inequality holds since any maximum matching of $G_{1-i}$ intersects $G[N_{1-i}(x)]$ at most $|M_{1-i}|$ edges and intersects $G_{1-i}-E(G[N_{1-i}(x)])$ at most $|V_{1-i}|-e_G(x,V_{1-i})$ edges.
Now apply Lemma~\ref{LEMMA: Sum min deg b leqs} to $G_i$ $(i = 0, 1)$ with $\Delta_i\le k-1, \nu_{i}$ and $b= \Delta_i-\nu_{1-i}$ ($\leqs \Delta_i-2$ by Claim 2.1), we get
\begin{eqnarray*}
\sum_{x\in V_i}(\deg_G(x)-|V_{1-i}|) & \le & \sum_{x\in V_i}\deg_{G_i}(x)\\
 &\leqs &\sum_{x\in V_i}\min\{\deg_{G_i}(x), \Delta_i-\nu_{1-i}\}\\
 &\leqs &\nu_i(2\Delta_i-\nu_{1-i})\le \nu_i(2k-2-\nu_{1-i}) .
	\end{eqnarray*}
Summing over $i$ for $i=0,1$, we have
\begin{eqnarray*}
2e(G)-2|V_0||V_1|&\leqs&2\big[(k-1)(\nu_0+\nu_1)-\nu_0\nu_1\big]\\
&=&2\big[(k-1)^2-(k-1-\nu_0)(k-1-\nu_1)\big]\\
&<&2(k-1)^2,
\end{eqnarray*}
the last inequality holds since $\nu_i\leqs k-2$ by Claim 2.1.
Hence, $e(G)<|V_0||V_1|+(k-1)^2\leqs e(T_{n,2})+(k-1)^2$. This completes the proof of Claim 2.2 and so of Claim~\ref{Claim:nu0+nu1<k}.


\begin{claim}\label{CLAIM:Dta and Nu attaining max}
	If $\max\{\Dta_0,\Dta_1\}\leqs k-2$, then $k=4$ and $e(G)=e(T_{n,2})+(k-1)^2$. Furthermore, if $G$ is $H_{s,t}$-free, then $(s,t)=(3,1)$ and $G\in \mathcal{F}_{n,3,1}$.
\end{claim}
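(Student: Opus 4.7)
The plan is to bound $m=e(G_0)+e(G_1)$ above and below by $(k-1)^2$ to force $k=4$ and a complete bipartite structure, describe the $G_i$'s as unions of triangles, and then appeal to Lemma~\ref{LEMMA: Main technique} to rule out every $(s,t)\neq(3,1)$. Apply Lemma~\ref{LEMMA:f_Nu_Delta} to each $G_i$: since $\Delta_i\leqs k-2$ and $\nu_i\leqs \nu_0+\nu_1\leqs k-1$ by Claim~\ref{Claim:nu0+nu1<k}, one obtains $e(G_i)\leqs\nu_i(\Delta_i+1)\leqs\nu_i(k-1)$, and hence $m\leqs(k-1)^2$. The reverse inequality $m\geqs(k-1)^2$ comes from $e(G)\geqs e(T_{n,2})+(k-1)^2$ together with $e(V_0,V_1)\leqs|V_0||V_1|\leqs e(T_{n,2})$. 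Equality throughout yields $\nu_0+\nu_1=k-1$, $\Delta_i=k-2$ whenever $\nu_i>0$, $G[V_0,V_1]=K_{|V_0|,|V_1|}$ with $\bigl||V_0|-|V_1|\bigr|\leqs 1$, and $e(G)=e(T_{n,2})+(k-1)^2$. Moreover, the equality $f(\nu,\Delta)=\nu(\Delta+1)$ forces $\Delta$ to be even with $\lceil\Delta/2\rceil\mid\nu$, so $k-2$ is even and $(k-2)/2$ divides every positive $\nu_i$. Combined with $\nu_0+\nu_1=k-1$, a short divisibility argument yields $(k-2)/2\mid 1$, and hence $k=4$.

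With $k=4$ and $\Delta_i\leqs 2$, each component of $G_i$ is a path or a cycle; since the ratio $e/\nu=3$ is achieved only by triangles, $G_i$ must consist of $\nu_i$ vertex-disjoint triangles together with isolated vertices. To exclude $(\nu_0,\nu_1)\in\{(1,2),(2,1)\}$, let $x$ be a vertex of the unique triangle in the side $G_i$ with $\nu_i=1$. Then $\deg_{G_i}(x)=2$, a maximum matching $M_{1-i}$ of $G[N_{1-i}(x)]=G_{1-i}$ has size $\nu(G_{1-i})=2$, and both $\nu(G[V_i\setminus N_i(x)])$ and $\nu(E_{V_{1-i}\setminus V(M_{1-i})}(x))$ vanish; so the left-hand side of~(\ref{e2}) equals $4$, while condition~(ii) of the lemma demands $\leqs k-1=3$ (its hypothesis $\deg_{G_i}(x)+|M_{1-i}|=4\geqs s$ holds for every admissible $s$). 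This contradiction leaves $(\nu_0,\nu_1)\in\{(0,3),(3,0)\}$, so $G\in\mathcal{F}_{n,3,1}$.

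Finally, assume without loss of generality that $G_0=3K_3$ plus isolated vertices and $G_1$ is edgeless, and suppose $(s,t)\neq(3,1)$, so that $s\leqs 2$. Take $x$ to be a triangle vertex of $G_0$ and $M_1=\emptyset$; then $\deg_{G_0}(x)+|M_1|=2\geqs s$, and the left-hand side of~(\ref{e1}) equals $2+0+2+0=4=k$, with the summand $\nu(G[V_0\setminus N_0(x)])=2$ counting the two remaining triangles. Choosing $B_0=B_1=\emptyset$ and $U_i=V_i$, the structural hypotheses of Lemma~\ref{LEMMA: Main technique} follow from condition~(i) of the present lemma, since $\Delta_i\leqs 2<\beta n$ and $G[V_0,V_1]$ is complete bipartite. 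The lemma then produces a copy of $H_{s,t}$ in $G$, contradicting $H_{s,t}$-freeness; hence $(s,t)=(3,1)$. The main obstacle is the divisibility analysis pinning down $k=4$: extracting the equality case of Lemma~\ref{LEMMA:f_Nu_Delta} and matching it against $\nu_0+\nu_1=k-1$ is the only genuinely arithmetic step, the rest being largely structural bookkeeping.
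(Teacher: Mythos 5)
Your proof is correct, and while it follows the paper's overall skeleton (Chv\'atal--Hanson counting, an equality analysis forcing $k=4$ together with a complete balanced bipartite part, identification of the triangles, and then condition (ii)), two sub-steps take a genuinely different route. To pin down $k=4$, the paper bounds $m\leqs f(\nu_0+\nu_1,k-2)\leqs f(k-1,k-2)$ and evaluates this directly, noting it falls below $(k-1)^2$ unless $k=4$; you instead use the weaker bound $f(\nu,\Delta)\leqs\nu(\Delta+1)$ from Lemma~\ref{LEMMA:f_Nu_Delta} and extract from its equality case that each side with $\nu_i>0$ has $\Delta_i=k-2$ even with $(k-2)/2$ dividing $\nu_i$, hence $(k-2)/2\mid\gcd(k-2,k-1)=1$ and $k=4$ --- an equivalent, correct computation. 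To identify the triangles you argue per side via $e\leqs 3\nu$ for graphs of maximum degree $2$ (equality only for triangle components), where the paper applies Observation~\ref{OBS: o1} to the union of nontrivial components to get $3K_3$; same content. The real divergence is the final step: the paper deduces $(s,t)=(3,1)$ from the equality case of Lemma~\ref{LEMMA:MAIN LEMMA}, i.e.\ from condition (ii) alone, and only afterwards places the triangles in one class by exhibiting a vertex violating (\ref{e2}); you first force all three triangles into one class by violating (\ref{e2}) for a mixed split (legitimate, since the graph between the classes is complete, so $G[N_{1-i}(x)]=G_{1-i}$ and your chosen matching is maximum), and then rule out $s\leqs 2$ by constructing an actual copy of $H_{s,t}$ via Lemma~\ref{LEMMA: Main technique} with $B_i=\emptyset$, $U_i=V_i$, which uses the $H_{s,t}$-freeness hypothesis. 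This is permitted by the claim's phrasing (the ``furthermore'' is conditioned on freeness) and costs nothing in the lemma's later applications, where only the value of $e(G)$ is needed in the non-free setting; in exchange, your route avoids having to verify the hypotheses of Lemma~\ref{LEMMA:MAIN LEMMA} for the graph of nontrivial components, a verification the paper leaves implicit.
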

\noindent By Lemma \ref{LEMMA:f_Nu_Delta},
	\begin{eqnarray*}
		m&=&e(V_0)+e(V_1)\leqs f(\nu_0,\Delta_0)+f(\nu_1, \Delta_1)\\
		&\leqs&f(\nu_0+\nu_1,k-2)\leqs f(k-1,k-2).
	\end{eqnarray*}
If $k\neq 4$, then $m\leqs f(k-1,k-2)=(k-1)^2-1$, a contradiction to $m\geqs (k-1)^2$. So $k=s+t=4$ and we have $m\leqs f(3,2)=(k-1)^2=9$. Therefore, $m=(k-1)^2=9$ and so $G$ contains a complete balanced bipartite subgraph with partite sets $V_0$ and $V_1$. Let $H$ be the subgraph consisting of nonempty components of $G_0\cup G_1$.  Then $H$ is a graph with $e(H)=9$, $\Delta(H)=2$ and $\nu(H)=3$. Hence $|V(H)|\ge e(H)=9$, the equality holds if and only if $H$ is 2-regular. By Observation~\ref{OBS: o1} and the fact that $\nu(H)\ge \omega(H)$, we have $|V(H)|=9$ and $\omega(H)=3$, that is $H=3K_3$. By Lemma~\ref{LEMMA:MAIN LEMMA}, $s=3$ and $t=1$.
Then $G$ must be a Tur\'an graph $T_{n,2}$ with $H$ embedded into one class (that is $G\in\mathcal{F}_{n,3,1}$), otherwise we can easily find a vertex which contradicts condition (ii) of the lemma.

\begin{claim}\label{CLAIM:e0e1=0} If $\max\{\Dta_0,\Dta_1\}=k-1$, then  $e(V_0)\cdot e(V_1)=0$.
\end{claim}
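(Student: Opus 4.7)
The approach is by contradiction and edge-counting. Suppose $e(V_0), e(V_1) > 0$; by symmetry, assume $\Delta_0 = k-1$, realized by $x \in V_0$ with $\deg_{G_0}(x) = k-1$ and $N_0(x) = \{x_1, \dots, x_{k-1}\}$. Since $\deg_{G_0}(x) = k-1 \geq s$, condition (ii) applies to $x$ with any maximum matching $M_1$ of $G[N_1(x)]$, and because the left-hand side already equals $k-1$ from $\deg_{G_0}(x)$ alone, every other term must vanish: $G[N_1(x)]$ is edgeless (so $M_1 = \emptyset$), $V_0 \setminus N_0(x)$ is independent in $G$, and every edge of $G_1$ avoids $N_1(x)$. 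Letting $F_1$ be a maximum matching of $G_1$ with $|F_1| = \nu_1 \geq 1$, we have $V(F_1) \subseteq V_1 \setminus N_1(x)$, and $x$ misses every vertex of $V(F_1)$, contributing at least $2\nu_1$ missing bipartite edges from $x$.

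I next plan to show that for each $x_j \in N_0(x)$ and each $uv \in F_1$, $x_j$ is adjacent to at most one of $\{u, v\}$ (``no double adjacency''). Suppose for contradiction that $x_j$ is adjacent to both endpoints of some $uv \in F_1$; then $\{uv\}$ is a matching in $G[N_1(x_j)]$, and one can choose a maximum matching $M_1^{x_j}$ of $G[N_1(x_j)]$ containing $F_1 \cap G[N_1(x_j)]$. Because $F_1$ is maximum in $G_1$, the subgraph $G_1 - V(F_1)$ is edgeless, so the remaining edges of $F_1$ incident to $N_1(x_j)$ all lie in $E_{V_1 \setminus V(M_1^{x_j})}(x_j)$, giving $|M_1^{x_j}| + \nu(E_{V_1 \setminus V(M_1^{x_j})}(x_j)) \geq f(x_j)$, where $f(x_j)$ counts the edges of $F_1$ incident to $N_1(x_j)$. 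Applying condition (ii) to $x_j$ (applicability of the threshold $\deg_{G_0}(x_j) + |M_1^{x_j}| \geq s$ is automatic for $s \leq 2$ since $\deg_{G_0}(x_j) \geq 1$ and $|M_1^{x_j}| \geq 1$; for $s \geq 3$ one enlarges $M_1^{x_j}$ using additional $F_1$-edges in $G[N_1(x_j)]$, leveraging Claim \ref{Claim:nu0+nu1<k}) yields
\[
\deg_{G_0}(x_j) + f(x_j) + \nu(G[V_0 \setminus N_0(x_j)]) \leq k - 1,
\]
and exploiting that $\nu(G[V_0 \setminus N_0(x_j)]) \geq 1$ whenever some star-edge $xx_i$ (with $i \neq j$ and $x_i \not\sim x_j$) survives in $V_0 \setminus N_0(x_j)$ produces the required contradiction through a short sub-case analysis on the structure of $G_0$ in a neighborhood of $x$.

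Granted the ``no double adjacency'' property, each $x_j$ has at least $\nu_1$ missing edges to $V(F_1)$, so the total number of missing bipartite edges between $\{x\} \cup N_0(x)$ and $V(F_1)$ is at least $2\nu_1 + (k-1)\nu_1 = (k+1)\nu_1$. On the other hand, since $|V_0||V_1| \leq \lfloor n^2/4 \rfloor = e(T_{n,2})$ and $e(G) \geq e(T_{n,2}) + (k-1)^2$, the total number of missing bipartite edges is at most $m - (k-1)^2$, where $m = e(V_0) + e(V_1)$. By Lemma \ref{LEMMA:f_Nu_Delta} together with Claim \ref{Claim: Delta<k} ($\Delta_i \leq k-1$), $e(V_i) \leq k\nu_i$, whence $m \leq k(\nu_0 + \nu_1) \leq k(k-1)$ by Claim \ref{Claim:nu0+nu1<k}, so the missing bipartite edges are at most $k-1$. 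Combining yields $(k+1)\nu_1 \leq k-1$, forcing $\nu_1 < 1$ and contradicting $\nu_1 \geq 1$.

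The main obstacle is the second paragraph: carefully verifying the ``no double adjacency'' property in all parameter regimes, particularly ensuring applicability of condition (ii) at $x_j$ when $s \geq 3$ and locating the lower bound on $\nu(G[V_0 \setminus N_0(x_j)])$ needed to close the inequality. The edge-counting portion is then routine.
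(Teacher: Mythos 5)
Your opening is sound and coincides with the paper: applying condition (ii) at the vertex $x$ with $\deg_{G_0}(x)=k-1\geqs s$ forces $G[N_1(x)]$ to be edgeless, $V_0\setminus N_0(x)$ to be independent, and $\nu(E_{V_1}(x))=0$. But from there the proposal has a genuine gap: everything hinges on the ``no double adjacency'' property for the vertices $x_j\in N_0(x)$, which you do not prove, and the sketch for it does not go through. First, condition (ii) is only usable at $x_j$ when $\deg_{G_0}(x_j)+|M_1^{x_j}|\geqs s$; if $s\geqs 3$, $\deg_{G_0}(x_j)=1$ and $x_j$ sees both endpoints of exactly one $F_1$-edge, the threshold fails and there are no ``additional $F_1$-edges in $G[N_1(x_j)]$'' with which to enlarge the matching, so no inequality at $x_j$ is available at all. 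Second, your proposed source of $\nu(G[V_0\setminus N_0(x_j)])\geqs 1$ — a surviving star-edge $xx_i$ — is vacuous: $x\in N_0(x_j)$, so no edge $xx_i$ lies inside $G[V_0\setminus N_0(x_j)]$. Third, even where (ii) does apply, the resulting inequality $\deg_{G_0}(x_j)+f(x_j)+\nu(G[V_0\setminus N_0(x_j)])\leqs k-1$ is not by itself contradictory (for $k\geqs 4$ it is comfortably satisfied with $f(x_j)=1$), and the ``short sub-case analysis'' that is supposed to close it is never carried out. So the contradiction is not actually reached.

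The paper needs none of this machinery about the $x_j$'s. From $\nu(E_{V_1}(x))=0$ it concludes that $N_1(x)$ meets no non-isolated vertex of $G_1$; letting $A_1$ be the set of non-isolated vertices of $G_1$, the single vertex $x$ already misses all $|A_1|$ cross edges to $A_1$, whence $|A_1|+(k-1)^2\leqs m$. Then Lemma \ref{LEMMA:f_Nu_Delta} together with $\Delta_1+1\leqs |A_1|$, $\nu_0\leqs k-2$ and $\nu_0+\nu_1\leqs k-1$ gives $m\leqs \nu_0 k+(k-1-\nu_0)|A_1|\leqs (k-1)^2+|A_1|-1\leqs m-1$, a contradiction. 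Your final edge count (missing cross edges $\leqs m-(k-1)^2\leqs k-1$) is correct and close in spirit to this, but it cannot stand without the unproven double-adjacency step; if you want to complete the argument, replace that step by the paper's observation that $x$ alone supplies at least $|A_1|\geqs \Delta_1+1$ missing cross edges and run the refined $f(\nu,\Delta)$ bound instead.
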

\noindent By Lemma \ref{LEMMA:f_Nu_Delta} and Claim \ref{Claim:nu0+nu1<k}, we have
	\begin{eqnarray*}
		m&\leqs& f(\nu_0,k-1)+f(\nu_1,k-1)\leqs f(\nu_0+\nu_1,k-1)\\
		&\leqs& f(k-1,k-1)\leqs k(k-1).
	\end{eqnarray*}	
	Wlog, we assume $\Dta_0=k-1$. Let $x\in V_0$ with $\deg_{G_0}(x)=k-1$. We will show that $e(V_1)=0$.
	If not, then $\nu_1\geqs1$ and so $\nu_0\leqs k-2$. Let $A_1=\{u\in{V_1}:\deg_{G_1}(u)>0\}$.
	By~(\ref{e2}), we have $A_1\cap N_1(x)=\emptyset$.
	So $e(V_0,V_1)\leqs|V_0||V_1|-|A_1|\leqs e(T_{n,2})-|A_1|$. Thus we have  $$e(T_{n,2})+(k-1)^2\leqs e(G)\leqs e(T_{n,2})-|A_1|+m.$$
	Therefore,  $|A_1|\leqs m-(k-1)^2$. That is $|A_1|+(k-1)^2\leqs m$.  Again by Lemma~\ref{LEMMA:f_Nu_Delta},  we have
	\begin{eqnarray*}
		m&\leqs&f(\nu_0,\Dta_0)+f(\nu_1,\Dta_1)
		\leqs \nu_0(\Dta_0+1)+\nu_1(\Dta_1+1)\\
		&\leqs&\nu_0k+(k-1-\nu_0)(\Dta_1+1)\, \mbox{ (since $\Delta_0=k-1$ and $\nu_0+\nu_1\leqs k-1$)}\\
		&\leqs&\nu_0k+(k-1-\nu_0)|A_1|\, \mbox{ (since $\Dta_1+1\leqs|A_1|$)}\\
		&=&\nu_0(k-|A_1|)+(k-1)|A_1|\\
		&\leqs&(k-2)(k-|A_1|)+(k-1)|A_1|\, \mbox{ (since $|A_1|\leqs k-1$ and $\nu_0\leqs k-2$)}\\
		& =&(k-1)^2+|A_1|-1\\
		&\le  & m-1, \mbox{ a contradiction.}
	\end{eqnarray*}

 Claim \ref{CLAIM:Dta and Nu attaining max} and Claim \ref{CLAIM:e0e1=0} implies that $\max\{\Dta_0,\Dta_1\}=k-1$ and $e(V_0)\cdot e(V_1)=0$. Without loss of generality, assume that $e(V_1)=0$. Then $m=e(V_0)$,  $\Delta_0=k-1$ and $\Delta_1=0$. Let $A_0$ be the set of non-isolated vertices in $G_0$. Claim~\ref{Claim:nu0+nu1<k} implies that $\nu(G[A_0])\le k-1$. Condition (ii) and Lemma \ref{LEMMA:MAIN LEMMA} imply that $m=e(G[A_0])\leqs (k-1)^2$. Thus we have $m=(k-1)^2$ and $G$ contains a complete balanced bipartite subgraph with classes $V_0$ and $V_1$. Again by Lemma \ref{LEMMA:MAIN LEMMA}, $G[A_0]\cong K_{k-1,k-1}$. That is $G\in\mathcal{F}_{n,s,t}$. This completes the proof of Lemma \ref{LEMMA: Force to extremal graph}.
\qed\\

\noindent{\it Proof of Theorem \ref{THM:MAIN THEOREM 1}}. Let $G$ be an extremal graph on $n$ vertices for $H_{s,t}$, where $n$ is large enough. Let $k=s+t$. By Lemma~\ref{LEMMA:Gnk is Ckq-free}, we have $e(G)\geqs e(T_{n,2})+(k-1)^2$. By Lemma~\ref{Lemma:delete to get large minidegree}, we may assume that $\delta(G)\geqs\lf\frac{n}{2}\rf$. Let $E(V_0, V_1)$ be a maximum cut of $G$ and let $B$ be defined as in Lemma \ref{LEMMA: bipartition with bad vertices}.
By Lemma~\ref{LEMMA: bipartition with bad vertices}, we have

(a) $m=e(V_0)+e(V_1)<\gamma n^2$ and $|B|<\frac 12{\sqrt\gamma}n<\frac 18\beta n$;

(b) $\frac{n}{2}-\sqrt\gamma n\leqs|V_i|\leqs\frac{n}{2}+\sqrt\gamma n$ for $i=0,1$;

(c) $e(u,V_{1-i})\geqs\frac{n}{4}-\frac14$ for  $u\in V_i \ (i=0,1)$;

(d) Moreover, $e(u,V_{1-i})\geqs\frac{n}{2}-\beta n-\frac12$ for  $u\in V_i\setminus B\  (i=0,1)$.

{ Let $B_i=B\cap V_i$ for $i=0,1$.} Then,  for each $v\in B_i,\, (i=0,1)$, $e_{G_i}(v, V_i\setminus B_i)\ge k\lc\frac{1}{2k}\deg_{G_i}(v)\rc$ since 
$$|B_i|\le|B|\leqs\frac 18\beta n\leqs\frac{1}{2}\beta n-k\leqs \deg_{G_i}(v)-k\lc\frac{1}{2k}\deg_{G_i}(v)\rc.$$
Hence we can keep $k\lc\frac{1}{2k}\deg_{G_i}(v)\rc$ edges of $E(v, V_i\setminus B_i)$ and delete the other edges incident with $v$ in $G_i$ for each  $v\in B_i$.
Denote the resulting graph by $G'$.

{Note that $E(G'[B_i])=\emptyset$ and condition (i) of Lemma~\ref{LEMMA: Main technique} is guaranteed by (b) and (c). Since $G'$ is $H_{s,t}$-free, $G'$ has no vertex in $B_i$ ($i=0,1$), say $v$, satisfying that $\deg_{G'[V_i]}(v)\geqs k$, otherwise we can find a copy of $H_{s,t}$, by applying Lemma~\ref{LEMMA: Main technique} to $G'$ with $B_i$ and $U_i=V_i$ ($i=0,1$). So we must have $B_i=\emptyset~(i=0,1)$ since $k-1<\frac12\beta n\leqs k\lc\frac{1}{2k}\deg_{G_i}(v)\rc=\deg_{G'[V_i]}(v)$ for every $v\in B_i~ (i=0,1)$ and thus $G=G'$.

Now note that $\max\{\Delta_0, \Delta_1\}\leqs k-1$ and $\delta(G)\geqs\lf\frac{n}{2}\rf$. Together with (b), condition (i) of Lemma~\ref{LEMMA: Force to extremal graph} is guaranteed. Meanwhile, by Lemma \ref{LEMMA: Main technique}, since $G$ is $H_{s,t}$-free, all vertices of $V(G)$ do not satisfy inequality~(\ref{e1}) (or equivalently, for any vertex $x\in V_i,~(i=0,1)$ and any maximum matching $M_{1-i}$ of $G[N_{1-i}(x)]$  with $\deg_{G_i}(x)+|M_{1-i}|\geqs s$, $x$ satisfies inequality~(\ref{e2})). So condition $(ii)$ of Lemma \ref{LEMMA: Force to extremal graph} is also guaranteed. Thus, apply Lemma \ref{LEMMA: Force to extremal graph} to $G$, since $G$ is $H_{s,t}$-free, we have $G\in\mathcal F_{n,s,t}$.
}
\qed

\section{Proof of Theorem \ref{THM:MAIN THEOREM 2}}
We need to show that provided with large enough integer $n$, $\phi(G,H_{s,t})\leqs\ex(n,H_{s,t})$ for all graphs $G$ on $n$ vertices, with equality holds if and only if $G\in \mathcal F_{n,s,t}$. Since $\phi(G,H_{s,t})=e(G)-p_{H_{s,t}}(G)(e(H_{s,t})-1)$, it suffices to show that  $$p_{H_{s,t}}(G)\geqs\frac{e(G)-\ex(n,H_{s,t})}{e(H_{s,t})-1}$$  for all graphs $G$ on $n$ vertices with $e(G)\geqs\phi(G,H)\geqs \ex(n,H_{s,t})=e(T_{n,2})+(k-1)^2$ (by Theorem \ref{THM:MAIN THEOREM 1}), with equality holds if and only if $G\in\mathcal F_{n,s,t}$.

Let $G$ be such a graph. By Lemma \ref{Lemma:phi case delete to get large minidegree}, we may assume without loss of generality that $\delta(G)\geqs\lf\frac{n}{2}\rf$. Let $E(V_0,V_1)$ be a maximum cut of $G$. Let $B$ be defined as in Lemma~\ref{LEMMA: bipartition with bad vertices} and $B_i=B\cap V_i$ for $i=0,1$. Then, by Lemma~\ref{LEMMA: bipartition with bad vertices}, we have

(a) $m=e(V_0)+e(V_1)<\gamma n^2$ and $|B|<\frac {2\gamma}{\beta}n$;

(b) $\frac{n}{2}-\sqrt\gamma n\leqs|V_i|\leqs\frac{n}{2}+\sqrt\gamma n$ for $i=0,1$;

(c) $e(u,V_{1-i})\geqs\frac{n}{4}-\frac14$ for  $u\in V_i \ (i=0,1)$;

(d) Moreover, $e(u,V_{1-i})\geqs\frac{n}{2}-\beta n-\frac12$ for  $u\in V_i\setminus B_i\  (i=0,1)$.

Let $$C(H_{s,t})=\frac{2k(k-1)(2e(H_{s,t})-k-1)}{e(H_{s,t})-2k-1}$$ be a constant that depends only on $H_{s,t}$. We divide the proof into two cases according to $m>c(H_{s,t})$ or $m\leqs c(H_{s,t})$.

\medskip
\noindent{\bf Case 1.}	$m>C(H_{s,t})$.

\medskip

\noindent We do the same operation to $G$  as in the proof of Theorem~\ref{THM:MAIN THEOREM 1}.  That is we keep $k\lc\frac{1}{2k}\deg_{G_i}(v)\rc$ edges of $G$ that connect $v$ to its neighbors in $V_i\setminus B_i$ and delete the other edges incident with $v$ in $G_i$ for each vertex $v\in B_i,\, i=0,1$. Denote the the resulting graph by $G^0$. 

\medskip

\noindent{\bf Claim 1.}	We have that $e(G^0[V_0])+e(G^0[V_1])\geqs\frac{m}{2}.$

\medskip
\noindent	For each $i\in \{0,1\}$, we can see
	\begin{eqnarray*}
		e(G^0[V_i]) &=& e(G^0[V_i\setminus B_i])+\sum_{v\in B_i}\deg_{G^0[V_i]}(v)\\
		&=&e(G[V_i\setminus B_i])+\sum_{v\in B_i}k\lc\frac{1}{2k}\deg_{G_i}(v)\rc\\
		&\geqs&\frac12e(G[V_i\setminus B_i])+\frac12\sum_{v\in B_i}\deg_{G_i}(v)\\
		&\geqs&\frac12e(V_i).
	\end{eqnarray*}
Hence, $e(G^0[V_0])+e(G^0[V_1])\geqs \frac 12 e(V_0)+\frac 12 e(V_1)\ge \frac{m}{2}$.

\medskip

 We will use the following algorithm to find enough edge-disjoint copies of $H_{s,t}$ in $G$. Initially, {set $U_i^0=V_i\setminus B_i$, for $i=0,1$.}

\medskip
\noindent{\bf Algorithm 1}: Begin with $G^0, U_0^0,U_1^0,B_0,B_1$, suppose that we have gotten $G^j$ and $U_0^j, U_1^j$ for some $j\geqs0$. A vertex $u\in V_i\setminus B_i$ $(i=0,1)$ is {\it active} in $G^j$ if $e_{G^j}(u, V_{1-i})\geqs\frac{n}{2}-2\beta n-\frac{1}{4}$, otherwise, is {\it inactive}. It's clear that all vertices in $V(G^0)\setminus B$ are active in $G^0$ by (d) of Lemma~\ref{LEMMA: bipartition with bad vertices}.

\medskip
\noindent{\bf Step 1.} If there is some $u\in V_i$ with $\deg_{G^j[V_i]}(u)\geqs k$ (here the vertices of $B$ are considered first), applying Lemma \ref{LEMMA: Main technique} to $G^j$ with $V_i, U_i^j, B_i$ for $i=0,1$,  then we can find a copy of $H_{s,t}$ in $G^j$. Let $G^{j+1}$ be the graph obtained from $G^{j}$ by deleting the edges of the $H_{s,t}$.  {Let  $U_i^{j+1}$ be the set of all active vertices of $V_i$ for $i=0,1$.} We stop at some iteration $G^a$ and turn to Step 2 if there is no vertex $u\in V_i$ with $\deg_{G^a[V_i]}(u)\ge k$.

\medskip
\noindent{\bf Step 2.} If there is a matching of size $k$ in $G^j[V_i]$ for some $i\in\{0,1\}$, then, again applying Lemma~\ref{LEMMA: Main technique} to $G^j$, we can find a copy of $H_{s,t}$ in $G^j$. Update $G^j$ to $G^{j+1}$ by the same method as in Step 1. If there there is no such a matching, we stop and denote the resulting graph by $G'$.

Clearly, $\Delta(G'[V_i])\le k-1$ and $\nu(G'[V_i])\}\leqs k-1$ for $i\in\{0,1\}$. So by Lemma \ref{LEMMA:f_Nu_Delta},  we have $e(G'[V_i])\leqs f(k-1,k-1)\leqs k(k-1)$ for $i\in\{0,1\}$. Note that in each step, the copy of $H_{s,t}$ we found uses exactly $k$ edges of $E(G^0[V_0])\cup E(G^0[V_1])$. Thus the number of edge-disjoint copies of $H_{s,t}$ we have found after Step 1 and Step 2 finished is equal to
\begin{eqnarray*}
	& &\frac{e(G^0[V_0])+e(G^0[V_1])-e(G'[V_0])-e(G'[V_1])}{k}
\geqs\frac{m/2-2k(k-1)}{k}\\
	&&>\frac{m-(k-1)^2}{e(H_{s,t})-1}=\frac{(e(T_{n,2})+m)-(e(T_{n,2})+(k-1)^2))}{e(H_{s,t})-1}\\
	&&\geqs\frac{e(G)-\ex(n,H_{s,t})}{e(H_{s,t})-1},
\end{eqnarray*}
the first inequality holds by Claim 1 and the second inequality holds since $m>C(H_{s,t})$. So to complete proof of Case 1, it suffices to show that Algorithm 1 can be successfully iterated. We prove it in the following claim.

\medskip
\noindent{\bf Claim 2.}	Algorithm 1 can be successfully iterated.

\medskip

\noindent	Let $G^j$ ($j\ge 0$) be the graph obtained  at some point of the iteration. It suffices to verify that $G^j$ satisfies the conditions of Lemma \ref{LEMMA: Main technique}. Note that the total number of iterations is most $\frac mk<\frac{\gamma n^2}{k}$.
In each iteration, there are $e(H_{s,t})-k$ edges removed from $E(V_0, V_1)$. Note that $e_{G^0}(u,V_{1-i})\geqs\frac{n}{2}-\beta n-\frac12$ for  $u\in V_i\setminus B_i\  (i=0,1)$ and $e_{G^j}(u, V_{1-i}){\color{red}<} \frac{n}{2}-2\beta n-\frac12$ for each inactive vertex $u\in V_i$ in $G^j$. So the number of inactive vertices in $G^j$ is at most $\frac{(e(H_{s,t})-k)\frac mk}{\beta n}<\frac{(c(H_{s,t})-1)\gamma}{\beta}n$. {(Recall that $c(H_{s,t})$ is the circumference of $H_{s,t}$.) So $|V_i\setminus U^j_i|<\frac{(c(H_{s,t})-1)\gamma}{\beta}n\leqs\sqrt{\gamma}n$.

For vertex $u\in U_i^j$, since $u$ is active, we have
	$e_{G^j}(u,V_{1-i})\geqs\frac{n}{2}-2\beta n-\frac{1}{4}>\frac25n.$
For vertex $u\in B_i$, $u$ was involved in at most $\deg_{G^0[V_i]}(u)/k$ previous iterations and lost $k$ edges from $E_{G_0}(u, V_{1-i})$ in each iteration. Therefore,
	\begin{eqnarray*}
		&&e_{G^j}(u,V_{1-i})\geqs e_{G^0}(u,V_{1-i})-k\cdot e_{G^0}(u,V_i)/k=e_{G}(u,V_{1-i})-e_{G^0}(u,V_i)\\
		&&\geqs e_{G}(u,V_{1-i})-\frac12e_{G}(u,V_i)-k\geqs\frac12e_{G}(u,V_{1-i})-k\geqs\frac{n}{8}-k-1>\frac n9.
	\end{eqnarray*}
For each $u\in V_i^j\setminus(U^j_i\cup B_i)$,  $u$ must become inactive in some previous iteration, say in $G^{j'}$, with $j'\le j$.
}
Note that after $u$ became inactive, $u$ lost at most $\deg_{G^{j'}[V_i]}(u)$ edges from $E_{G^0}(u, V_{1-i})$. Hence, we have
	\begin{eqnarray*}
		&&e_{G^j}(u,V_{1-i})\geqs e_{G^{j'}}(u,V_{1-i})-\deg_{G^{j'}[V_i]}(u)\geqs e_{G^{j'}}(u,V_{1-i})-\deg_{G^{0}[V_i]}(u)\\
		&&>\frac{n}{2}-2\beta n-\frac{1}{4}-\Delta(H_{s,t})-\beta n=\frac{n}{2}-3\beta n-\frac{1}{4}-\Delta(H_{s,t})>\frac25n.
	\end{eqnarray*}
Therefore, Step 1 of Algorithm 1 can be iterated successfully. As for Step 2, suppose that $G^j[V_i]$ has a matching $M_i$ of size $k$. It suffices to show $V(M_i)$ has a common neighbor in $V_{1-i}$. To see this, note that each vertex $u\in B_i$ has degree $k\lceil\frac{\deg_{G_i}(u)}{2k}\rceil$ in $G^0[V_{i}]$; hence in any iteration $G^{j'}$ of Step 1, $\deg_{G^{j'}[V_i]}(u)$ must be a multiple of $k$. So, after Step 1 was finished, $u\in B_i$ must have in-degree zero. Thus we have $V(M_i)\cap B_i=\emptyset$ and so the number of common neighbors of $V(M_{i})$ in $V_{1-i}$ is at least $$2k(\frac{n}{2}-3\beta n-\frac{1}{4}-\Delta(H_{s,t}))-(2k-1)|V_{1-i}|\geqs\frac n2-2k(\sqrt\gamma+3\beta)n\geqs\frac{24}{50}n.$$

\medskip

\noindent{\bf Case 2.}	$m\leqs C(H_{s,t})$.

\medskip
\noindent 
Since $2m\leqs2C(H_{s,t})\ll \beta n\le \frac{n}{2}-\sqrt{\gamma}n\leqs|V_i|$, both $G[V_0]$ and $G[V_1]$ have isolated vertices and {   $B=\emptyset$ by the definition of $B$ (hence in the following algorithm we always choose $B_0=B_i=\emptyset$ in each iteration)}. Since $\delta(G)\geqs\lf\frac{n}{2}\rf$, we have $|V_i|\ge \lf\frac{n}{2}\rf$ for $i=0,1$. 
Since $\{V_0, V_1\}$ is a partition of $V(G)$, we have $\lf\frac{n}{2}\rf\leqs|V_i|\leqs\lc\frac{n}{2}\rc$ for $i=0,1$. That is $\{V_0,V_1\}$ is a balanced partition of $V(G)$.  We will use the following algorithm to find enough many copies of $H_{s,t}$ in $G$. Initially, set $G^0=G$, $V_i^0=V_i$ and set $U_{i}^0$ to be the set of isolated vertices in $G[V_{i}^0]$ for $i=0,1$. Note that  $|V_i^0\setminus U_i^0|\le 2m\le 2C(H_{s,t})< \sqrt{\gamma}n$ for sufficiently large $n$.

\medskip

\noindent {\bf Algorithm 2}: Begin with $G^0,U^0_i~(i=0,1)$. Suppose that we have get $G^j$, $V_i^j$ and $U_i^j$ $(i=0,1)$ for some $j\geqs0$. Define $i^*=i^*(j)=\left\{\begin{aligned}
              0 & \mbox{\ \   if $|V_0^j|\geqs|V_1^j|$},\\
              1  & \mbox{\ \   otherwise}.
            \end{aligned}\right.
$

\noindent{\bf Step 1.} If there exists some vertex $x^j\in V_{i}^j$ with $\deg_{G[V_{i}^j]}(x^j)\geqs k$, then, apply Lemma \ref{LEMMA: Main technique} to $G^j$, we find a copy of $H_{s,t}$, say $H$,  with $V(H)\cap U_{i^*}^j\not=\emptyset$. Choose some vertex $u^j\in V(H)\cap U_{i^*}^j$. Let $G^{j+1}$ be the graph obtained from $G^j$ by deleting $u^j$ and $E(H)$. Set $V_{i^*}^{j+1}=V_{i^*}^j\setminus\{u^j\}$, $V_{1-i^*}^{j+1}=V_{1-i^*}^j$, $U_{i^*}^{j+1}=U_{i^*}^j\setminus\{u^j\}$, and $U_{1-i^*}^{j+1}=U_{1-i^*}^j$ for $i=0,1$. We stop Step 1 until there is no vertex of in-degree at least $k$ in $G^a$ for some integer $a$ and then turn to Step 2.

\medskip

\noindent{\bf Step 2.}  If there exists some vertex $x^j\in V_{i}^j$ satisfying inequality~(\ref{e1}) of Lemma \ref{LEMMA: Main technique}, then, apply Lemma \ref{LEMMA: Main technique} to $G^j$,  we find a copy of $H_{s,t}$, say $H$, with center $x^j$ and $V(H)\cap U_{1-i}^j\not=\emptyset$. { Set $u^j=x^j$ if $i=i^*$, otherwise choose any $u^j\in V(H)\cap U_{i^*}^j$.}
 Let $G^{j+1}$ be the graph obtained from $G^j$ by deleting $u^j$ and $E(H)$.
Set $V_{i^*}^{j+1}=V_{i^*}^j\setminus\{u^j\}$, $V_{1-i^*}^{j+1}=V_{1-i^*}^j$, $U_{i^*}^{j+1}=U_{i^*}^j\setminus\{u^j\}$ and $U_{1-i^*}^{j+1}=U_{1-i^*}^j$. We stop Step 2 if there is no vertex satisfying inequality~(\ref{e1}) in $G^b$ for some integer $b\ge a$.

\medskip
\noindent{{\textbf{Remark.}} Clearly, $b\leqs m/k<C(H_{s,t})/k$ can be bounded by a constant since in each iteration we find a copy of $H_{s,t}$ intersecting $E_G(V_0)\cup E_G(V_1)$ exactly $k$ edges and $C(H_{s,t})$ is a constant. So we always assume that $|V(G^j)|=n-j$ is sufficiently large and   $\ex(n-j,H_{s,t})=e(T_{2,n-j})+(k-1)^2$ for every $j=0,1,\cdots,b$ (by Theorem \ref{THM:MAIN THEOREM 1}). Also note that  in each iteration we delete one vertex from the bigger part, so $\{V^j_0,V^j_1\}$ is balanced for all $j\in \{0,1,\cdots,b\}$ since $\{V^0_0,V^0_1\}$ is balanced.}

\medskip
\noindent{\bf Claim 3}	For any $j\in\{0,1,\cdots, b\}$, $e_{G^j}(u,V_{1-i}^j)>\lf\frac{n}{2}\rf-4C(H_{s,t})$ for each $u\in V_i^j$, $i=0,1$. In particular, Algorithm 2 can be successfully iterated.

\medskip
\noindent Let $u\in V_i^j$. Since in each previous iteration $u$ lost at most $\Delta(H_{s,t})+1=2k+1$ neighbors in $V_{1-i}$, we have
	$$e_{G^j}(u,V_{1-i}^j)\geqs e_{G^0}(u,V_{1-i})-(2k+1)b\geqs\lf\frac{n}{2}\rf-m-(2k+1)b\geqs\lf\frac{n}{2}\rf-4C(H_{s,t}).$$
	Another clear fact is that $|V_i^j\setminus U_i^j|\leqs2m\le 2C(H_{s,t})<\sqrt{\gamma}(n-j)$ for $i\in\{0,1\}$ and $j\in\{0,1,2,\cdots, b\}$. So Algorithm 2 can be successfully iterated.	

\medskip

\noindent{\bf Claim 4} $$p_{H_{s,t}}(G)\geqs\frac{e(G)-\ex(n,H_{s,t})}{e(H_{s,t})-1},$$ with equality holds if and only if $G\in\mathcal F_{n,s,t}$.

\medskip
\noindent
 {We prove  $p_{H_{s,t}}(G^j)\geqs\frac{e(G^j)-\ex(n,H_{s,t})}{e(H_{s,t})-1}$ inductively for $j=b,b-1,\dots,0$.
 Now since there is no vertex in $G^b$ satisfying inequality (1) of Lemma \ref{LEMMA: Main technique}, we must have $e(G^b)\leqs\ex(n-b,H_{s,t})$, otherwise, $e(G^b)>\ex(n-b,H_{s,t})$, then by Lemma \ref{LEMMA: Force to extremal graph}, we have $e(G^b)=\ex(n-b,H_{s,t})$, a contradiction. So we have $$p_{H_{s,t}}(G^b)\geqs0\geqs\frac{e(G^b)-\ex(n-b,H_{s,t})}{e(H_{s,t})-1},$$with equality holds if and only if $G^b\in\mathcal F_{n-b,s,t}$.
 	
 Now suppose that $$p_{H_{s,t}}(G^{j+1})\geqs\frac{e(G^{j+1})-\ex(n-j-1,H_{s,t})}{e(H_{s,t})-1}$$ holds for some $j+1\in[1,b]$. We show that the above inequality holds for $j$. By Algorithm 2,
 suppose that $G^{j+1}$ is obtained from $G^j$ by deleting $u^j$ and $E(H)$, where $H$ is a  copy of $H_{s,t}$. Clearly, we have
 $$e(G^{j+1})=e(G^j)-e(H)-\deg_{G^j}(u^j)+\deg_{H}(u^j).$$
 	
 	If $u^j\in V(H)\cap U_{i^*}^j$, then $\deg_{G^j}(u^j)\leqs\lf\frac{n-j}{2}\rf$ by definition of $U$ and $i^*$. Note that  $\deg_H(u^j)\geqs\delta(H_{s,t})=2,$ and $\{V_0^j,V_1^j\}$ is balanced. Hence we have
 	\begin{eqnarray*}
 		p_{H_{s,t}}(G^{j})&\geqs&p_{H_{s,t}}(G^{j+1})+1\\
 		&\geqs&\frac{e(G^{j+1})-\ex(n-j-1,H_{s,t})}{e(H_{s,t})-1}+1\\
 		&\geqs&\frac{\big(e(G^j)-e(H)-\lf\frac{n-j}{2}\rf+2\big)-\big(\ex(n-j,H_{s,t})-\lf\frac{n-j}{2}\rf\big)}{e(H_{s,t})-1}+1\\
 		&=&\frac{e(G^{j})-\ex(n-j,H_{s,t})+1}{e(H_{s,t})-1}\\
 		&>&\frac{e(G^{j})-\ex(n-j,H_{s,t})}{e(H_{s,t})-1}.
 	\end{eqnarray*}
 	
 	If $u^j\notin V(H)\cap U_{i^*}^j$, then the case only happens in Step 2 when $u^j\in V_{i^*}^j$ is the center of $H$. So $\deg_H(u^j)=2k$. Meanwile, since it happens in Step 2, $u^j$ has in-degree at most $k-1$. So we have that $\deg_{G^j}(u^j)\leqs k-1+\lf\frac{n-j}{2}\rf$. Thus
 	\begin{eqnarray*}
 		p_{H_{s,t}}(G^{j})&\geqs&p_{H_{s,t}}(G^{j+1})+1\\
 		&\geqs&\frac{e(G^{j+1})-\ex(n-j-1,H_{s,t})}{e(H_{s,t})-1}+1\\
 		&\geqs&\frac{\left( e(G^j)-e(H)-(k-1+\lf\frac{n-j}{2}\rf)+2k\right )-\left(\ex(n-j,H_{s,t})+\lf\frac{n-j}{2}\rf\right)}{e(H_{s,t})-1}+1\\
 		&=&\frac{e(G^{j})-\ex(n-j,H_{s,t})+k}{e(H_{s,t})-1}\\
 		&>&\frac{e(G^{j})-\ex(n-j,H_{s,t})}{e(H_{s,t})-1}.
 	\end{eqnarray*}

Therefore, we can inductively conclude that $p_{H_{s,t}}(G^j)\geqs\frac{e(G^j)-\ex(n-j,H_{s,t})}{e(H_{s,t})-1}$ for all $j\in[0,b]$, with equality holds if and only if $b=0$ and $G=G^0\in\mathcal F_{n,s,t}$.  	The proof is now completed.
 	\qed
 }


\begin{thebibliography}{99}

  \bibitem{ImprovedError}
	P. Allen, J, B\"{o}ttcher, and Y. Person, An improved error term for minimum $H$-decompotions of graphs, {\it J. Combin. Theory Ser. B}, {\bf108} (2014), 92-101.
	
	\bibitem{Bollobas}
	B. Bollob\'{a}s, On complete subgraphs of different orders, {\it Math. Proc. Cambridge. Philos. Soc.} {\bf 79(1)} (1976) 19-24.
	
	


	\bibitem{Hanson}
	V. Chav\'atal, D. Hanson, Degrees and matchings, {\it J. Combin. Theory Ser. B}, {\bf 20(2)} (1976) 128--138.
	
	\bibitem {Wei}
	G. Chen, R. J. Gould, F. Pfender, and B. Wei, Extremal graphs for intersecting cliques, {\it J. Combin. Theory Ser. B} {\bf 89} (2003) 159--171.
	
	\bibitem{Erdos-66}
	P. Erd\H{o}s, On some new inequalities concerning extremal properties of graphs, in: Theory of Graphs, Proc. Colloq., Tihany, 1966, Academic Press, New York, 1968, pp. 77-81.
	
	\bibitem{Erdos-95}
	P. Erd\H{o}s, Z. F\"uredi, R. J. Gould, D. S. Gunderson, Extremal graphs for intersecting triangles, {\it J. Combin. Theory Ser. B} {\bf 64(1)} (1995) 89--100.

\bibitem{intersections}
	P. Erd\H{o}s, A. W. Goodman, L. P\'{o}sa, The representation of a graph by set intersections, {\it Canad. J. Math.} {\bf18}(1966) 106--112.
	
	\bibitem{Glebov-2011}
	R. Glebov. Extremal graphs for clique-paths. arXiv:1111.7029v1, 2011.

\bibitem{Dec for Fkr}
	X. Hou, Y. Qiu and B. Liu, Decompositions of graphs into $(k,r)$-fans and single edges, arXiv: 1510.00811
	
	\bibitem{Ex for Ckq}
	X. Hou, Y. Qiu and B. Liu, Extremal graph for intersecting odd cycles, {\it Electron. J. Combin.}, 23(2) (2016), P2.29.
	
	\bibitem{Liu-2013}
	H. Liu, Extremal graphs for blow-ups of cycles and trees, {\it The Electron. J. of combin.} 20(1) (2013), P65.
	
  \bibitem{k-fan}
	H. Liu, T. Sousa, Decompositions of Graphs into Fans and Single Edges, manuscript. {\it J. Graph Theory}, doi:10.1002/jgt.22069.
	
	\bibitem{edge-critical case}
	L. \"{O}zkahya and Y. Person, Minimum $H$-decomposition of graphs: edge-critical case, {\it J. Combin. Theory Ser. B}, {\bf102(3)} 715--725, 2012.
	
	\bibitem{general}
	O. Pikhurko and T. Sousa, Minimum $H$-decompositions of graphs,  {\it J. Combin. Theory Ser.B}, {\bf 97(6)} (2007) 1041--1055.
	


   \bibitem{Simonovits-66}
	M. Simonovits, A method for solving extremal problems in graph theory, stability problems, in: Theory of Graphs, Proc.
	Colloq., Tihany, 1966, Academic Press, New York, 1968, pp. 279-319.
	
	
	
	\bibitem{5-cycles}
	T. Sousa, Decompositions of graphs into 5-cycles and other small graphs, {\it Electron. J. Combin.,} 12 (2005), p49.
	
	\bibitem{7-cycle}
	T. Sousa, Decompositions of graphs into cycles of length seven and single edges, {\it Ars Combin.,}  119 (2015), 321-329.
	
	\bibitem{clique-extension}
	T. Sousa, Decompositions of graphs into a given clique-extension, {\it Ars Combin.,} {\bf100} (2011), 465--472.
	
	
\end{thebibliography}
\end{document}